\documentclass[12pt,reqno]{amsart}
\usepackage{hyperref}
\usepackage[a4paper,margin=2.5cm]{geometry}

\newcommand{\Var}{\mathrm{Var}\,}
\newcommand{\E}{\mathbb{E}}
\newcommand{\p}{\mathbb{P}}

\newcommand{\C}{\mathbb{C}}

\newcommand*{\ind}[1]{\mathbf{1}_{\{#1\}}}
\newcommand*{\Ind}[1]{\mathbf{1}_{#1}}

\newcommand{\R}{\mathbb{R}}

\newcommand{\id}{\mathrm{Id}}

\newcommand{\un}{{(n)}}
\newcommand{\INCOMP}{\mathrm{Incomp}}
\newcommand{\COMP}{\mathrm{Comp}}
\newcommand{\SPARSE}{\mathrm{Sparse}}
\newcommand{\SPAN}{\mathrm{span}}
\newcommand{\DIST}{\mathrm{dist}}

\newtheorem{lemma}{Lemma}[section]
\newtheorem{theorem}[lemma]{Theorem}
\newtheorem{proposition}[lemma]{Proposition}
\newtheorem{remark}[lemma]{Remark}

\title[Circular law for log-concave random matrices]{Circular law for random
  matrices with unconditional log-concave distribution}

\thanks{Support: Polish Ministry of Science and Higher Education Iuventus Plus
  Grant no. IP 2011 000171, and French ANR-2011-BS01-007-01 GeMeCoD and
  ANR-08-BLAN-0311-01 Granma.}

\author{Rados{\l}aw Adamczak} %
\address[RA]{Institute of Mathematics, University of Warsaw, Poland} %
\email{R.Adamczak@mimuw.edu.pl}

\author{Djalil Chafa\"{\i}} %
\address[DC]{UMR CNRS 8050 Universit\'e Paris-Est Marne-la-Vall\'ee and Labex
  B\'ezout, France, and Institut Universitaire de France.}
\urladdr{http://djalil.chafai.net/}

\subjclass[2000]{60B20 ; 47A10}%
\keywords{Random matrices; Spectral Analysis; Convex bodies; Concentration of measure}%

\begin{document}

\maketitle

\begin{abstract}
  We explore the validity of the circular law for random matrices with non
  i.i.d.\ entries. Let $A$ be a random $n\times n$ real matrix having as a
  random vector in $\mathbb{R}^{n\times n}$ a log-concave isotropic
  unconditional law. In particular, the entries are uncorellated and have a
  symmetric law of zero mean and unit variance. This allows for some dependence
  and non equidistribution among the entries, while keeping the special case
  of i.i.d.\ standard Gaussian entries. Our main result states that as $n$
  goes to infinity, the empirical spectral distribution of
  $\frac{1}{\sqrt{n}}A$ tends to the uniform law on the unit disc of the
  complex plane.
\end{abstract}

\section{Introduction}

For an $n\times n$ matrix $A$, denote by $\nu_A$ its spectral measure, defined
as $\nu_A = \frac{1}{n} \sum_{i=1}^n \delta_{\lambda_i}$, where
$\lambda_1,\ldots,\lambda_n$ are the eigenvalues of $A$ which are the roots in
$\C$ of the characteristic polynomial of $A$, counted with multiplicities, and
where $\delta_x$ stands for the Dirac mass at point $x$. If $A$ is Hermitian,
then we will treat $\nu_A$ as a finite discrete probability measure on $\R$,
while in the general case it is a finite discrete probability measure on $\C$.
When $A$ is a random matrix, then $\nu_A$ becomes a random measure.

The behaviour of the spectral measure of non-Hermitian random matrices has
drawn considerable attention over the years following the work by Mehta
\cite{Mehta1}, who proved, by using explicit formulas due to Ginibre
\cite{Ginibre}, that the expected spectral measure of $n\times n$ matrices
with i.i.d.\ standard complex Gaussian entries scaled by $\sqrt{n}$ converges
to the uniform measure on the unit disc (which we will call the circular law).
Further developments \cite{G1,G2,Bai,PanZhou,GTCirc,TV} succeeded in extending
the result to a beautiful universal statement valid for any random matrix with
i.i.d.\ entries of unit variance.

It is natural to try to relax the conditions of finite variance and/or
independence imposed on the entries of the matrix by those results. Relaxing
the finite variance assumption leads to limiting distributions which are not
the circular law, see \cite{BCC2,BCAround}. There are various ways to relax
the independence assumption. The circular law was first proved for various
models of random matrices with i.i.d.\ rows, such as for certain random Markov
matrices \cite{BCC1}, for random matrices with i.i.d.\ log-concave rows
\cite{CircLawUnc,InfoNoise}, for random $\pm1$ matrices with i.i.d.\ rows of
given sum \cite{NguyenVuCirc} (see also \cite{MR3010398}), etc. Beyond the
i.i.d.\ rows structure, the circular law was proved only for very specific
models such as for blocs of Haar unitary matrices \cite{MR2480790,MR2919538},
and more recently for random doubly stochastic matrices following the uniform
distribution on the Birkhoff polytope \cite{NguyenDoublyStoch}. Our main
result stated in Theorem \ref{thm:main} below is establishing the circular law
for a new class of random matrices with dependent entries beyond the i.i.d.\
rows structure. Theorem \ref{thm:main} is a natural extension of the model
studied in \cite{CircLawUnc}. However, it does not include as special cases
the models studied in \cite{MR2480790,MR2919538,NguyenDoublyStoch}. The
general idea behind Theorem \ref{thm:main} comes from asymptotic geometric
analysis and states roughly that in large dimensions, for many aspects,
unconditional isotropic log-concave measures behave like product measures with
unit variance and sub-exponential tail.

We say that a probability measure $\mu$ on $\R^d$ is \emph{log-concave} when
for all nonempty compact sets $A,B$ and $\theta \in (0,1)$, $\mu(\theta A +
(1-\theta)B) \ge \mu(A)^\theta\mu(B)^{1-\theta}$. A measure $\mu$ not
supported on a proper affine subspace of $\R^d$ is log-concave iff it has
density $e^{-V}$ where $V \colon \R^d \to \R\cup\{+\infty\}$ is a convex
function, see \cite{B}. We say that $\mu$ is \emph{isotropic} if it is
centered and its covariance matrix is the identity, in other words if the
coordinates are uncorellated, centered, and have unit variance. Recall finally
that $\mu$ is called \emph{unconditional} if $X=(X_1,\ldots,X_d)$ and
$(\varepsilon_1 X_1,\ldots,\varepsilon_d X_d)$ have the same law when $X$ has
law $\mu$ and $\varepsilon_1,\ldots,\varepsilon_d$ are i.i.d.\ Rademacher
random variables (signs) independent of $X$. The isotropy and the
unconditionality are related to the canonical basis of $\R^d$, while the
log-concavity is not. Note that unconditionality together with unit variances
imply automatically isotropy. We use in the sequel the identifications
$\mathcal{M}_n(\R)\equiv\R^{n^2}$ and $\mathcal{M}_n(\C)\equiv\C^{n^2}$ in
which a $n\times n$ matrix $M$ with rows $R_1,\ldots,R_n$ is identified with
the vector $(R_1,\ldots,R_n)$.

\begin{theorem}[Circular law for isotropic unconditional log-concave random
  matrices]\label{thm:main}
  Let $A_n=[X_{ij}^\un]_{1\leq i,j\le n}$ be $n\times n$ random matrices,
  defined on a common probability space. Assume that for each $n$, the
  distribution of $A_n$ as a random vector in $\R^{n^2}$ is log-concave,
  isotropic and unconditional. Then with probability one the empirical
  spectral measure of $\frac{1}{\sqrt{n}}A_n$ converges weakly as $n\to\infty$
  to the uniform measure on the unit disc of $\C$.
\end{theorem}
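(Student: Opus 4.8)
The plan is to follow the Hermitization (logarithmic potential) strategy of Girko, made rigorous by Bai, G\"otze--Tikhomirov and Tao--Vu, with the independence of the entries replaced throughout by the concentration and anti-concentration properties of isotropic unconditional log-concave vectors in high dimension. Write $\nu_n$ for the empirical spectral measure of $\frac1{\sqrt n}A_n$, set $B_n(z):=\frac1{\sqrt n}A_n-zI$, and let $\mu_{z,n}$ denote the empirical distribution of the singular values of $B_n(z)$. For $z\notin\mathrm{spec}(\frac1{\sqrt n}A_n)$ one has the identity
\[
-\int_{\C}\log|z-w|\,d\nu_n(w)=-\frac1n\log\bigl|\det B_n(z)\bigr|=-\int_0^\infty\log s\,d\mu_{z,n}(s),
\]
and since the logarithmic potential of a compactly supported probability measure on $\C$ determines it, it is enough to prove: \textbf{(a)} for Lebesgue-almost every $z\in\C$, almost surely $\mu_{z,n}$ converges weakly to the deterministic limiting singular value distribution $\mu_z$ that arises in the i.i.d.\ case; and \textbf{(b)} almost surely $s\mapsto\log s$ is uniformly integrable with respect to the family $(\mu_{z,n})_{n\ge1}$. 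Indeed $\|A_n\|=O(\sqrt n)$ almost surely (see the end of the next paragraph) keeps $\nu_n$ supported in a fixed disc, so it suffices to identify the limit on a compact set; the standard argument (e.g.\ \cite{TV}) then gives $\nu_n\to\mu_{\mathrm{circ}}$ a.s., because as a function of $z$ the quantity $\int\log s\,d\mu_z(s)$ is classically known to coincide with $\int\log|z-w|\,d\mu_{\mathrm{circ}}(w)$. The passage from ``for each fixed $z$, a.s.'' to ``a.s., for a.e.\ $z$'' is handled by restricting $z$ to a countable dense set and applying Borel--Cantelli, which is why the quantitative inputs below must carry polynomial-in-$n$ probability estimates.

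For step (a) I would prove a Marchenko--Pastur--type theorem for the additively deformed matrix $B_n(z)$. The Stieltjes transform $\eta\mapsto\frac1n\tr\bigl(\widehat B_n(z)-\eta\bigr)^{-1}$ of the $2n\times2n$ Hermitization $\widehat B_n(z)$ (the symmetric block matrix with blocks $0$, $B_n(z)$, $B_n(z)^*$, $0$) is, for fixed $\eta$ with $\Im\eta>0$, a Lipschitz function of $A_n\in\R^{n^2}$ with Lipschitz constant $O\bigl(n^{-1}(\Im\eta)^{-2}\bigr)$, so by the (sub)exponential concentration of log-concave measures it concentrates around its expectation with summable deviation probabilities; it remains to identify the limit of the expected transform. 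Here one exploits that isotropic unconditional log-concave vectors behave, at the level of moments, like products of symmetric sub-exponential variables: unconditionality forces a mixed moment $\E[X_{i_1j_1}^\un\cdots X_{i_kj_k}^\un]$ to vanish unless every index pair occurs an even number of times, while log-concavity gives $\E|X_{ij}^\un|^k\le(Ck)^k$; a moment computation (or a cavity / self-consistent-equation argument) then yields the same recursion, hence the same limit $\mu_z$, as in the i.i.d.\ Gaussian model, for which $\int\log s\,d\mu_z$ is known to equal the logarithmic potential of $\mu_{\mathrm{circ}}$. The a priori bound $\|A_n\|=O(\sqrt n)$ a.s.\ follows in the same spirit from the Bobkov--Nazarov domination of isotropic unconditional log-concave laws by a product of symmetric exponentials together with the corresponding operator-norm bound in the independent sub-exponential case.

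Step (b), the uniform integrability of $\log$, is the delicate part. First, a polynomial lower bound on the least singular value: for a.e.\ $z$, $\p\bigl(s_{\min}(B_n(z))\le n^{-c}\bigr)$ is summable in $n$. This is obtained by the Rudelson--Vershynin invertibility scheme adapted to log-concave vectors. One splits $S^{n-1}$ into compressible and incompressible parts. On compressible (sparse) vectors one uses that for any fixed $x$ the vector $A_nx$ is an isotropic log-concave vector in $\R^n$ (its coordinates $\langle R_i,x\rangle$ are uncorrelated with common variance $\|x\|^2$), hence $\|B_n(z)x\|$ enjoys an exponential small-ball bound, which combines with an $\ve$-net and the operator-norm bound. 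On incompressible vectors one uses $s_{\min}(B_n(z))\gtrsim n^{-1/2}\min_i\DIST\bigl(R_i(B_n(z)),H_i\bigr)$, where $H_i$ is the span of the remaining rows, and bounds each distance from below: conditionally on the other rows, the law of the remaining row is again log-concave and unconditional (sign flips of that row's entries are among the allowed symmetries and fix the other rows), so $\DIST(R_i,H_i)=|\langle R_i,u_i\rangle|$ for a conditionally fixed unit normal $u_i$, and anti-concentration for log-concave variables gives $\p\bigl(|\langle R_i,u_i\rangle|\le t\mid\mathrm{rest}\bigr)\le Ct/\sqrt{u_i^{\mathsf T}\Sigma_iu_i}$, where $\Sigma_i=\mathrm{diag}(v_{i1},\dots,v_{in})$, $v_{ik}=\E[R_{ik}^2\mid\mathrm{rest}]$, is the (diagonal, again by unconditionality) conditional covariance. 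Second, a restricted-invertibility estimate showing that few singular values are small, of the form $s_{n-k}(B_n(z))\gtrsim k/n$ for all $k$ with overwhelming probability, proved in the same circle of ideas. Together these control $\int_0^\ve|\log s|\,d\mu_{z,n}(s)$ uniformly and yield (b).

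The main obstacle is the incompressible-vectors estimate in step (b): the distance-to-hyperplane anti-concentration bound when the rows of $A_n$ are \emph{dependent}. In the i.i.d.-rows model of \cite{CircLawUnc} the conditional law of a row given the others is just its unconditional isotropic law, whereas here one must show, with polynomial probability, that the conditional covariance $\Sigma_i$ stays non-degenerate in the random direction $u_i$ orthogonal to the other rows, i.e.\ that $u_i^{\mathsf T}\Sigma_iu_i$ is bounded below. This is exactly where the unconditional log-concave structure of $A_n$ as a vector in $\R^{n^2}$ --- and not merely a property of the individual rows --- must be used: via the law of total variance ($\sum_k\E v_{ik}=n$) and the concentration of $\|R_i\|$ one controls the $v_{ik}$ on average, and a spreading argument for $u_i$ (it is incompressible, hence not concentrated on the few coordinates where $v_{ik}$ might be small) converts this into the required lower bound on $u_i^{\mathsf T}\Sigma_iu_i$. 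The remaining ingredients --- concentration for the Stieltjes transform, the moment combinatorics identifying $\mu_z$, the operator-norm bound, the compressible-vectors estimate, and the Borel--Cantelli bookkeeping --- are laborious but of a comparatively routine nature given the above.
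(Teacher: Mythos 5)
Your overall architecture (Hermitization, concentration of the Stieltjes transform, Rudelson--Vershynin for the least singular value, Tao--Vu for the count of small singular values) is the same as the paper's, but two steps are not merely ``routine given the above,'' and one of them as you describe it would not go through. The serious issue is the intermediate bound $s_{n-k}(B_n(z))\gtrsim k/n$ ``with overwhelming probability,'' which via the Tao--Vu negative second moment identity requires $\DIST(R_i,\SPAN(\text{$k$ other rows}))\gtrsim\sqrt{n-k}$ with (stretched-)exponentially small failure probability. Your only anti-concentration mechanism is the scalar conditional small-ball bound $\p(|\langle R_i,u\rangle|\le t\mid\mathrm{rest})\le Ct/\sqrt{u^{\mathsf T}\Sigma_i u}$, which addresses a single direction (codimension one) at polynomial scales; it cannot produce a lower bound of order $\sqrt{n-k}$ on the norm of a projection onto an $(n-k-1)$-dimensional random subspace with the required probability. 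Moreover, your proposed control of the conditional variances ``on average'' via the law of total variance and concentration of $\|R_i\|$ only yields of order $n/\log^2 n$ coordinates bounded below (after truncating the maximum), and these need not meet the $cn$ large coordinates of the incompressible normal directions, so the ``spreading argument'' does not close. The paper's proof of its Lemma 4.4 supplies exactly the missing ideas: write $A_n=[\varepsilon_{ij}X_{ij}]$ with independent Rademacher signs (unconditionality), condition on the magnitudes so that the distance becomes a seminorm of the signs of the distinguished row, apply Khintchine--Kahane and Talagrand's concentration on the discrete cube to reduce to lower-bounding $\sum_j X_{k+1,j}^2\sum_i|e_{ij}|^2$, prove incompressibility of an orthonormal frame of the normal space by a net argument with the $C^n$ density bound (unconditional slicing), and show that \emph{all but} an $\alpha/2$ fraction of the $|X_{k+1,j}|$ exceed a constant, with exponentially small failure probability, again from the $C^m$ density bound on sub-vectors and a union bound over subsets. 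None of these devices appears in your plan, and the quantitative form of each is what makes the Borel--Cantelli bookkeeping work.

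A second, lesser gap is the identification of the limit $\mu_z$ in your step (a). Unconditionality kills odd mixed moments, but the surviving even mixed moments $\E\prod_a X_{e_a}^2$ over distinct entries need not factorize into $1$ because the entries are dependent (log-concavity only gives them bounded by constants), so the claim that the moment or cavity computation ``yields the same recursion as in the i.i.d.\ Gaussian model'' is exactly the point that needs proof; some averaged decoupling input (thin-shell/variance-type estimates, or a concentration-based universality theorem) is required. The paper sidesteps this by checking that row and column norms, normalized by $\sqrt n$, concentrate around $1$ (Poincar\'e inequality for unconditional log-concave vectors) and then quoting the universality result of \cite{CircLawUnc} for the expected measure, upgrading to almost sure convergence by the Hoffman--Wielandt Lipschitz bound and concentration. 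Relatedly, when you invoke ``concentration of log-concave measures'' for the Stieltjes transform, note that with Lipschitz constant $1/(n(\Im\xi)^2)$ you need a Poincar\'e constant polylogarithmic in the dimension, i.e.\ precisely Klartag's unconditional result used in the paper, not generic log-concavity.
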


If one drops the unconditionality assumption in Theorem \ref{thm:main} then
the limiting spectral measure needs not be the circular law. For instance, one
may consider random matrices with density of the form
$A\mapsto\exp(-\mathrm{Tr}(V(\sqrt{AA^*})))$ with $V$ convex and increasing, which are
log-concave (Klein's lemma) but for which the limiting spectral distribution
depends on $V$, see \cite[eq.~(5.8) p.~654]{MR1477381} and \cite{MR2831116}.
This is in contrast with the model of random matrices with i.i.d.\ log-concave
rows studied in \cite{CircLawUnc}, for which it turned out that the circular
law holds without assuming unconditionality, as explained in \cite{InfoNoise}.

We prove Theorem \ref{thm:main} by using the by now classical Hermitization
method introduced by Girko in \cite{G1} and further developed by Tao and Vu in
\cite{TV}. Following the scheme presented in \cite{BCAround}, we first
establish the convergence of the spectral measure of the matrix
\begin{equation}\label{eq:defMnz}
  B_n(z):=\sqrt{\left(\frac{1}{\sqrt{n}}A_n-z\id\right)\left(\frac{1}{\sqrt{n}}A_n-z\id\right)^\ast}
\end{equation}
and later obtain bounds on the small singular values of the matrix, which will
allow us to prove almost sure uniform integrability of the logarithm with
respect to the empirical spectral measure of $B_n(z)$. Putting these
ingredients together we obtain convergence of the logarithmic potential of the
empirical spectral measure of $\frac{1}{\sqrt{n}} A_n$, which ends the proof.

\subsection*{Outline} The organization of the paper is as follows. In Section
\ref{sec:toolbox}, we first recall some basic results concerning log-concave
unconditional measures, which will be useful in the proof. Next in Section
\ref{sec:reduction} we give the outline of the argument, prove convergence of
the empirical spectral measure of $B_n(z)$ and reduce the proof of Theorem
\ref{thm:main} to lower bounds on the singular values of $A_n$. These are
proved in Section \ref{sec:proofs_of_lemmas}.

\subsection*{Notations}
We will denote by $C,c$ positive absolute constants and by $C_a,c_a$ constants
depending only on the parameter $a$. In both cases the values of constants may
differ between occurrences (even in the same line). By $|\cdot|$ we will
denote the standard Euclidean norm of a vector in $\C^n$ or $\R^n$. We will
use $\|\cdot\|$ to denote the operator norm of a matrix and
$\|\cdot\|_{\mathrm{HS}}$ to denote its Hilbert-Schmidt norm. For a
probability measure $\mu$ on $\R$ and $\xi$ in $\C_+ = \{z\in \C\colon \Im z >
0\}$ by $m_\mu(\xi)$ we will denote the Cauchy-Stieltjes transform, i.e.
\[
m_\mu(\xi) = \int_\R \frac{1}{\lambda - \xi}\mu(d\lambda).
\]
We refer to \cite{AGZ,BS} for general theory of Cauchy-Stieltjes transforms
and in particular their connection with weak convergence. For an $n\times n$
matrix $A$ by $s_1(A) \ge \cdots \ge s_n(A)$ we will denote its singular
values, i.e. eigenvalues of $(AA^\ast)^{1/2}$.

\section{Basic facts on unconditional log-concave measures}\label{sec:toolbox}

The geometric and probabilistic analysis of log-concave measures is a well
developed area of research. We recommend the reader the forthcoming monograph
\cite{GiannopoulosBook} for a detailed presentation of this rich theory. What
we will need for our analysis is properties related to the behaviour of
densities and concentration of measure results. In general such questions are
difficult and related to famous open problems, like the
Kannan-Lovasz-Simonovits question \cite{KLS} or the slicing problem
\cite{Hensley, MilmanPajorIsotropic}.

Fortunately, unconditional log-concave measures behave in a much more rigid
way than general ones, in particular some of the aforementioned questions have
been answered either completely or up to terms which are logarithmic in the
dimension and as such do not cause difficulties in the problems we are about
to deal with. Below we present the ingredients we will use throughout the
proof.

The first fact we will need follows immediately from the definition of
log-concavity: linear images of log-concave random vectors are themselves
log-concave. We also have the following tail estimate, which is a special case
of a more general fact due to Borell \cite{B}.

\begin{theorem}[Log-concave random variables have sub-exponential
  tails]\label{thm:Borell} If $X$ is a log-concave, mean zero, variance
  one random variable, then for all $t \ge 0$,
  \[
  \p(|X| \ge t) \le 2\exp(-ct).
  \]
\end{theorem}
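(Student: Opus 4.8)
The plan is to deduce this tail bound from the one‑dimensional instance of Borell's deviation inequality, which I will derive from scratch using only the defining inequality $\mu(\theta A+(1-\theta)B)\ge\mu(A)^{\theta}\mu(B)^{1-\theta}$ for the law $\mu$ of $X$. Since $\mu$ is log‑concave with variance one it is not a point mass, hence it has a density $e^{-V}$ with $V$ convex, and in particular no atoms. The first step is a crude localization: since $\E X=0$ and $\Var X=1$, Chebyshev's inequality gives $\p(|X|\ge 2)\le 1/4$, i.e. the symmetric interval $A:=[-2,2]$ satisfies $\mu(A)\ge 3/4$. Everything afterwards compares $\mu$ of the dilates $tA=[-2t,2t]$ with $\mu(A)$.

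The heart of the matter is the elementary inclusion
\[
\tfrac{2}{t+1}\bigl(\R\setminus tA\bigr)+\tfrac{t-1}{t+1}\,A\ \subseteq\ \R\setminus A\qquad(t\ge 1),
\]
valid for any symmetric convex $A$: if $x\notin tA$, $y\in A$, and $z:=\tfrac{2}{t+1}x+\tfrac{t-1}{t+1}y$ belonged to $A$, then, using $-y\in A$,
\[
x=\tfrac{t+1}{2}z+\tfrac{t-1}{2}(-y)=t\Bigl(\tfrac{t+1}{2t}z+\tfrac{t-1}{2t}(-y)\Bigr)\in tA
\]
by convexity of $A$, a contradiction. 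Applying the log‑concavity inequality with weights $\theta=\tfrac{2}{t+1}$, $1-\theta=\tfrac{t-1}{t+1}$ to the sets $\R\setminus tA$ and $A$ (replacing the unbounded set $\R\setminus tA$ by $[-M,M]\setminus tA$ and letting $M\to\infty$, which is routine as $\mu$ has no atoms) gives
\[
1-\mu(A)=\mu(\R\setminus A)\ \ge\ \mu(\R\setminus tA)^{2/(t+1)}\,\mu(A)^{(t-1)/(t+1)},
\]
and solving for the tail yields $\mu(\R\setminus tA)\le\mu(A)\bigl(\tfrac{1-\mu(A)}{\mu(A)}\bigr)^{(t+1)/2}$.

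Now insert $\mu(A)\ge 3/4$, so that $\tfrac{1-\mu(A)}{\mu(A)}\le\tfrac13$, to get $\p(|X|\ge 2t)\le\tfrac34\,3^{-(t+1)/2}$ for all $t\ge 1$. Writing $s=2t$, this reads $\p(|X|\ge s)\le\tfrac34\,3^{-(s/2+1)/2}\le 2e^{-cs}$ for $s\ge 2$ with an absolute constant $c>0$; for $0\le s<2$ the trivial bound $\p(|X|\ge s)\le 1\le 2e^{-cs}$ holds once $c$ is small enough, so after shrinking $c$ the estimate holds for every $s\ge 0$, which is the assertion. I do not foresee any genuine obstacle here: the only mildly delicate point is the compact‑set approximation used to apply the definition of log‑concavity to the complement of a dilate, and that is standard. (Alternatively, one could note that $s\mapsto\p(X\ge s)$ and its reflection are log‑concave by Prékopa's theorem, so $-\log\p(X\ge s)$ is convex, and then compare its values at a median — which lies in $(-2,2)$ by the same Chebyshev bound — and at $s=2$ to force linear growth; this gives the same conclusion.)
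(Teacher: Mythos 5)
Your proof is correct. Note that the paper does not prove this statement at all: it is quoted as a known result of Borell (the reference \cite{B}), so there is no in-paper argument to compare against. What you have written is essentially the classical proof of Borell's lemma, specialized to dimension one: the dilation inclusion $\tfrac{2}{t+1}(\R\setminus tA)+\tfrac{t-1}{t+1}A\subseteq\R\setminus A$ for a symmetric convex set $A$, the resulting bound $\mu(\R\setminus tA)\le\mu(A)\bigl(\tfrac{1-\mu(A)}{\mu(A)}\bigr)^{(t+1)/2}$, and the Chebyshev localization $\mu([-2,2])\ge 3/4$ coming from mean zero and unit variance; the constant bookkeeping at the end is fine. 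The only point to tidy is the approximation step you flag as routine: $[-M,M]\setminus tA$ is not compact (it is not closed at $\pm 2t$), and the closed set $\{2t\le|x|\le M\}$ would break the inclusion at the boundary points, so you should use $\{2t+\epsilon\le|x|\le M\}$ and let $\epsilon\downarrow 0$, $M\to\infty$, which is harmless since the law has a density (no atoms), as you observe. Your parenthetical alternative via Pr\'ekopa (log-concavity of $s\mapsto\p(X\ge s)$ and convexity of its negative logarithm, compared at the median and at $s=2$) is also a valid route to the same bound.
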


The next theorem provides a positive answer to the so-called slicing
conjecture in the case of unconditional log-concave measures. Let us recall
that the conjecture (in one of many equivalent formulations) asserts that the
density of a log-concave isotropic measure in $\R^n$ is bounded by $C^n$.
While this question is wide open in the general case, it has been answered
positively in the case of unconditional measures. We refer for instance to
\cite{BobkovNazarovUnc} for a proof. % for log-concave measures.
We will use it to obtain sharp small ball inequalities, which will be useful
when dealing with dependence between different rows of the matrix.

\begin{theorem}[Density bound for log-concave measures]\label{thm:Bourgain}
  The density of a log-concave unconditional measure in $\R^n$ is bounded from
  above by $C^n$, where $C$ is a universal constant.
\end{theorem}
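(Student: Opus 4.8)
The plan is to reduce the estimate to the Loomis--Whitney inequality for unconditional convex bodies. Write the density as $f=e^{-V}$ with $V\colon\R^n\to\R\cup\{+\infty\}$ convex, and recall that the relevant normalization here (the one under which the slicing conjecture is phrased, cf.\ the discussion above) is that the measure is isotropic, so that $\int_{\R^n}x_i^2f(x)\,dx=1$ for each $i$. The first, soft step is that $f$ attains its maximum at the origin: for every $x$, unconditionality gives $f(-x)=f(x)$, and log-concavity applied to $0=\frac12x+\frac12(-x)$ gives $f(0)\ge f(x)^{1/2}f(-x)^{1/2}=f(x)$. Hence $\|f\|_\infty=f(0)$, and it is enough to bound $f(0)$.

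Next I would pass from $f$ to a convex body via Ball's construction: since $f$ is log-concave with maximum at $0$, the formula
\[
\|x\|_{K_f}:=\Bigl(n\int_0^\infty r^{n-1}\frac{f(rx)}{f(0)}\,dr\Bigr)^{-1/n}
\]
defines the Minkowski gauge of a convex body $K_f$ (the convexity being Ball's theorem). This body is unconditional because $f$ is; a one-line computation in polar coordinates gives $|K_f|=\int_{\R^n}\frac{f(x)}{f(0)}\,dx=1/f(0)$; and the coordinate second moments of the uniform measure on $K_f$ are comparable to those of $f$ with \emph{absolute} constants, i.e.\ $c\le\int_{K_f}x_i^2\,dx/|K_f|\le C$ for every $i$, using the isotropy of $f$. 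These are classical facts about the Ball body; see e.g.\ \cite{GiannopoulosBook}. Thus the bound $f(0)\le C^n$ is equivalent to the volume lower bound $|K_f|\ge c^n$, i.e.\ to the boundedness of the isotropic constant of an unconditional convex body.

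For this, working with $K:=K_f$, I would combine three ingredients. First, the Loomis--Whitney inequality $|K|^{n-1}\le\prod_{i=1}^n|P_{e_i^\perp}K|$. Second, for an unconditional body the coordinate projection coincides with the corresponding central section: if $x+te_i\in K$, then $x-te_i\in K$ by unconditionality, hence $x=\frac12(x+te_i)+\frac12(x-te_i)\in K$, so that $P_{e_i^\perp}K=K\cap e_i^\perp$. Third, a one-dimensional estimate bounds this section: the value at $0$ of the density of the $i$-th coordinate under the uniform measure on $K$ equals $|K\cap e_i^\perp|/|K|$, this density is an even log-concave function on $\R$, and its variance $\int_K x_i^2\,dx/|K|$ is bounded below by an absolute constant by the previous paragraph; since an even log-concave probability density on $\R$ of variance $\sigma^2$ takes value at $0$ at most $C/\sigma$, we get $|K\cap e_i^\perp|\le C|K|$ for every $i$. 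Plugging this into Loomis--Whitney gives $|K|^{n-1}\le\prod_{i=1}^n C|K|=C^n|K|^n$, hence $|K|\ge C^{-n}$, and therefore $f(0)=1/|K|\le C^n$.

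The step I expect to be the main obstacle is the passage to the convex body $K_f$: the convexity of $K_f$ is Ball's theorem, which is not elementary, and the comparison of the second moments of $f$ with those of the uniform measure on $K_f$ must be invoked with genuinely \emph{absolute} constants. It is precisely here that isotropy is indispensable: it rules out the superexponential factors of order $n!$ that one would otherwise accumulate by working directly with the super-level sets $\{f\ge f(0)e^{-s}\}$, whose volumes may grow like $s^n$ in $s$. Since everything here is classical, in the paper one may simply cite \cite{BobkovNazarovUnc}.
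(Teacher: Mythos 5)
The paper does not prove this statement at all: it is quoted as a known result, with a pointer to \cite{BobkovNazarovUnc}, so there is no internal proof to compare against. Your sketch essentially reconstructs the standard argument from that literature (reduce to the isotropic constant of an unconditional convex body via Ball's bodies, then use that for unconditional bodies coordinate projections equal central sections, bound the sections by the one-dimensional log-concave density estimate, and conclude by Loomis--Whitney), and the chain of steps is sound: $\|f\|_\infty=f(0)$ by symmetry and log-concavity, $|K_f|=1/f(0)$, $P_{e_i^\perp}K=K\cap e_i^\perp$, $|K\cap e_i^\perp|\le C|K|$ from the variance lower bound, and Loomis--Whitney then gives $|K|\ge C^{-n}$. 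The only point that genuinely needs care is the one you flag yourself: the comparison of the normalized second moments of the uniform measure on $K_f$ with those of $f$ by absolute constants is not a one-line polar-coordinates identity for the body you wrote down (the exact identity holds for the body built with exponent $n+2$ rather than $n$), and one must invoke the standard comparison of the bodies $K_p(f)$ for $p\asymp n$, whose inclusion constants are of the form $1+O(1/n)$ and hence harmless after raising to the $n$-th power; this is classical and can be cited, e.g., from \cite{GiannopoulosBook}. One further remark: as printed, the theorem omits the word ``isotropic'', and without some variance normalization the bound is false (rescale a Gaussian); your reading, adding isotropy, is the intended one and matches every application of the theorem in the paper, where it is always applied to isotropic (or unit-variance) unconditional log-concave vectors.
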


Another result we will need is the following version of the Poincar\'e
inequality for log-concave measures, together with the concentration of
measure inequality which follows from it. We refer to
\cite{KlartagUnc,BartheCorderoSymmetry} and to the books
\cite{LedouxConcBook,logSob} for the general theory of concentration of
measure and its relations with functional inequalities. The question whether
all isotropic log-concave measures satisfy the Poincar\'e inequality with a
universal constant is another famous open problem in asymptotic geometric
analysis \cite{KLS}.

\begin{theorem}[Poincar\'e inequality from unconditional
  log-concavity]\label{thm:Poincare}
  If $X$ is an isotropic unconditional log-concave random vector in $\R^n$,
  then for every smooth $f\colon \R^n \to \R$,
  \[
  \Var f(X) \le C\log^2(n+1)  \E|\nabla f(X)|^2.
  \]
\end{theorem}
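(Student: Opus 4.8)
The plan is to deduce the stated Poincaré inequality for unconditional isotropic log-concave vectors from two ingredients: (i) the known fact (Klartag, and Barthe–Cordero-Erausquin) that such a measure $\mu$ on $\R^n$ satisfies a Poincaré inequality with a dimension-dependent constant, and (ii) a tensorization/symmetrization argument exploiting the unconditionality to reduce the loss to a power of $\log n$. Concretely, I would first recall that a one-dimensional symmetric log-concave measure with unit variance satisfies a Poincaré inequality with a universal constant $C_1$; this follows from Theorem~\ref{thm:Borell} (sub-exponential tails) together with the classical Muckenhoupt-type criterion on $\R$. The one-dimensional case is the base case on which the whole estimate rests.

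Next I would use the unconditional structure. Writing $X=(X_1,\dots,X_n)$ with law $\mu$, unconditionality means that, conditionally on $|X|=(|X_1|,\dots,|X_n|)$, the signs of the coordinates are i.i.d.\ Rademacher. The idea is to split the variance of $f(X)$ via the law of total variance along the filtration generated first by the moduli $|X_i|$ and then layer in the coordinatewise conditionals. For the "sign" part one applies the discrete Poincaré inequality on the hypercube (Efron–Stein), which contributes $\E|\nabla f(X)|^2$ with a universal constant and no dimensional loss. For the "modulus" part one is reduced to controlling the variance of a function of $(|X_1|,\dots,|X_n|)$, and here one uses that the conditional law of $X$ given its coordinate signs is still unconditional log-concave, while the marginals $|X_i|$ are one-dimensional log-concave; the dimensional price $\log^2(n+1)$ enters precisely at the stage where one must pass from the product of the one-dimensional Poincaré constants to the joint measure, because the coordinates are not independent — only uncorrelated — and dependence among unconditional log-concave coordinates is known (via the results of \cite{KlartagUnc,BartheCorderoSymmetry}) to degrade the Poincaré constant by at most a factor polylogarithmic in $n$. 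Assembling these pieces through the law of total variance and bounding each contribution by $C\log^2(n+1)\,\E|\nabla f(X)|^2$ gives the claim.

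**The hard part will be** the modulus part: controlling $\Var_{|X|}\big(\E[f(X)\mid |X|]\big)$ uniformly, since here one genuinely confronts the dependence structure of unconditional log-concave vectors and cannot simply tensorize. The cleanest route is to invoke the quantitative Poincaré inequality for unconditional log-concave measures established in \cite{KlartagUnc} (or the refinement via symmetrization in \cite{BartheCorderoSymmetry}), which already packages the $\log^2 n$ loss; the role of the signs/hypercube decomposition above is then mainly to confirm that the sign directions do not themselves cost anything, so the $\log^2(n+1)$ factor is the whole story. If one prefers a self-contained argument avoiding a black box, one would instead prove a weighted Hardy-type inequality on each coordinate ray using the density bound of Theorem~\ref{thm:Bourgain} to control conditional densities, and then combine over coordinates via the $L^2(\mu)$-orthogonality of the coordinate directions, absorbing the covariance cross-terms into the $\log^2(n+1)$ slack — this is where the density bound $C^n$ gets used, since it controls how far the conditional one-dimensional marginals can be from the unconditioned ones. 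Either way, the structural point is that unconditionality converts a hard isotropic Poincaré question into a tractable one with only a polylogarithmic penalty, and the remaining work is bookkeeping of constants across the conditioning steps.
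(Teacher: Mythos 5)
First, a point of comparison: the paper does not prove Theorem \ref{thm:Poincare} at all --- it is imported as a known result, with references to \cite{KlartagUnc,BartheCorderoSymmetry}, where precisely this $\log^2 n$ spectral gap bound for unconditional isotropic log-concave measures is established. Your ``cleanest route'' does the same thing: invoking the quantitative Poincar\'e inequality of \cite{KlartagUnc} for the modulus part is not a reduction, it is the theorem itself (applying it to functions of $(|X_1|,\dots,|X_n|)$ is not a genuinely weaker statement), so that version of your argument is circular as a proof, and the sign/modulus decomposition wrapped around it adds nothing beyond what the citation already gives. If the intent is simply to quote the literature, that is legitimate and is exactly what the paper does, but then the proposal should say so rather than present the citation as the hard step of a proof.

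Your fallback ``self-contained'' route has genuine gaps at each of its load-bearing steps. (i) The claim that the sign part costs only a universal constant via Efron--Stein on the hypercube is unjustified: the discrete difference $f(x)-f(\sigma_i x)$ (flip of the $i$-th sign) must be converted into $\partial_i f$ by integrating along the segment from $-x_i$ to $x_i$ through the origin, and to bound the resulting expression by $\E(\partial_i f(X))^2$ you need lower bounds on the conditional density of $X_i$ along that segment; this is exactly where nontrivial one-dimensional analysis (and potential logarithmic losses) lives, and it is not addressed. (ii) For the modulus part, ``absorbing the covariance cross-terms into the $\log^2(n+1)$ slack'' is not an argument: the dependence among $|X_1|,\dots,|X_n|$ is the entire difficulty, and nothing in the proposal quantifies it. (iii) Theorem \ref{thm:Bourgain} gives an \emph{upper} bound of order $C^n$ on the joint density; an exponential-in-$n$ factor is fatal for a Poincar\'e constant, and Muckenhoupt/Hardy criteria on coordinate rays require two-sided control of the relevant conditional one-dimensional densities, which unconditionality plus the $C^n$ bound do not provide. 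The actual proofs in \cite{KlartagUnc,BartheCorderoSymmetry} are substantially different (the $\log^2 n$ there ultimately reflects the sub-exponential tails of the coordinates, cf.\ Theorem \ref{thm:Borell}, not bookkeeping of cross-terms), so as written the self-contained sketch would not go through.
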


The above theorem implies in particular concentration of measure inequality
for Lipschitz functions via the so called Herbst argument, see for instance
\cite{MR708367} and \cite{LedouxConcBook}.

\begin{theorem}[Concentration of measure from the Poincar\'e
  inequality]\label{thm:concentration} If a random vector $X$ in $\R^n$
  satisfies the Poincar\'e inequality $\Var f(X) \le \lambda^{-1}\E|\nabla
  f(X)|^2$ for all smooth functions $f \colon \R^n \to \R$, then for all
  1-$Lipschitz$ functions $g\colon \R^n \to \R$ and all $t > 0$,
  \[
  \p(|g(X)-\E g(X)| \ge t) \le 2\exp(-c\sqrt{\lambda}t).
  \]
\end{theorem}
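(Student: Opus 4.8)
The plan is to run the classical Herbst-type argument that upgrades a spectral gap to exponential concentration (in the form going back to Gromov--Milman and Aida--Stroock, see also Bobkov--Ledoux). Fix a $1$-Lipschitz $g$. Replacing $g$ by $g-\E g(X)$ we may and do assume $\E g(X)=0$; note that applying the Poincar\'e inequality to $g$ itself already gives $\Var g(X)\le\lambda^{-1}$, so $g(X)\in L^2$, and that $|g(x)|\le|g(0)|+|x|$ together with $\E|X|<\infty$ shows $g(X)\in L^1$. Since the hypothesis is stated only for smooth functions, I would first carry out the estimate under the extra assumption that $g$ is bounded and smooth with $\|\nabla g\|_\infty\le1$ — any $1$-Lipschitz function is a uniform-on-compacts limit of such functions after mollification and truncation — and then remove boundedness and smoothness at the very end by Fatou's lemma and dominated convergence, the uniform bounds below being independent of the truncation level.

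Write $H(\theta):=\E e^{\theta g(X)}$, which is finite for bounded $g$. The heart of the argument is to apply the Poincar\'e inequality to $f=e^{\theta g/2}$. Since $\nabla f=\tfrac{\theta}{2}e^{\theta g/2}\nabla g$ with $|\nabla g|\le1$, one gets $\E|\nabla f(X)|^2\le\tfrac{\theta^2}{4}H(\theta)$, while $\Var f(X)=H(\theta)-H(\theta/2)^2$, so that
\[
H(\theta)\Bigl(1-\frac{\theta^2}{4\lambda}\Bigr)\le H(\theta/2)^2,\qquad |\theta|<2\sqrt{\lambda}.
\]
Iterating this inequality $k$ times and letting $k\to\infty$ — at which stage $H(\theta/2^k)^{2^k}\to1$, because for bounded $g$ with $\E g(X)=0$ one has $\log H(\theta/2^k)=O(4^{-k})$ — yields, for $|\theta|<2\sqrt{\lambda}$,
\[
H(\theta)\le\prod_{j\ge0}\Bigl(1-\frac{\theta^2}{4^{\,j+1}\lambda}\Bigr)^{-2^j},
\]
and taking logarithms and using $-\log(1-x)\le x/(1-x)$ gives $\log H(\theta)\le\dfrac{\theta^2/(2\lambda)}{1-\theta^2/(4\lambda)}$; in particular $\log H(\theta)\le\tfrac{2\theta^2}{3\lambda}$ whenever $|\theta|\le\sqrt{\lambda}$.

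It then remains to invoke Markov's inequality in Chernoff form: for $0<\theta\le\sqrt{\lambda}$,
\[
\p\bigl(g(X)-\E g(X)\ge t\bigr)\le e^{-\theta t}H(\theta)\le\exp\Bigl(-\theta t+\frac{2\theta^2}{3\lambda}\Bigr),
\]
and choosing $\theta=c_0\sqrt{\lambda}$ for a suitable absolute constant $c_0$ gives $\p(g(X)-\E g(X)\ge t)\le C\exp(-c\sqrt{\lambda}\,t)$ for all $t>0$; applying the same to $-g$ and taking a union bound yields the stated two-sided estimate, the factor $2$ and the value of $c$ being arranged by noting that the claimed inequality is trivial once $2\exp(-c\sqrt{\lambda}\,t)\ge1$. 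Every step here is elementary, so there is no genuine obstacle; the only points needing a little care are the passage to the limit $k\to\infty$ in the iteration — which is precisely why one first works with bounded $g$, so that $H$ is finite and smooth near the origin — and the reduction from an arbitrary $1$-Lipschitz function to a bounded smooth one, so that the Poincar\'e hypothesis, stated for smooth functions, may legitimately be applied to $e^{\theta g/2}$.
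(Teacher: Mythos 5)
Your argument is correct: it is exactly the classical Herbst/Aida--Stroock iteration (apply the Poincar\'e inequality to $e^{\theta g/2}$, iterate $H(\theta)(1-\theta^2/(4\lambda))\le H(\theta/2)^2$, bound the moment generating function, then Chernoff), which is precisely the route the paper takes, since the paper does not reprove this statement but simply invokes the Herbst-type argument from Gromov--Milman and Ledoux's book. The only cosmetic remark is that $\E|X|<\infty$, which you use for integrability, itself follows from the Poincar\'e hypothesis applied to (smooth truncations of) the coordinate functions, so nothing is missing.
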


Finally we will need the following result taken from \cite{LatalaWeakDom},
built on previous developments in \cite{BobkovNazarovUnc}. It provides a
comparison of norms of log-concave unconditional random vectors with norms of
vectors with independent exponential coordinates.

\begin{theorem}[Comparison of tails for unconditional log-concave
  measures]\label{thm:Latala}
  If $X$ is an isotropic unconditional log-concave random vector in $\R^n$ and
  $\mathcal{E}$ a standard $n$-dimensional symmetric exponential vector (i.e.
  with i.i.d. components of symmetric exponential distribution with variance
  one), then for any seminorm $\|\cdot\|$ on $\R^n$ and any $t > 0$,
  \[
  \p(\|X\| \ge Ct) \le C\p(\|\mathcal{E}\| \ge t).
  \]
\end{theorem}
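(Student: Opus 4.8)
The plan is to reduce the statement, in two moves, to a comparison between the laws of $X$ and of $\calE$ evaluated on coordinate-symmetric convex bodies, the second move being the genuine analytic input. For orientation, note that $\|X\|$ and $\|\calE\|$ are seminorms of log-concave random vectors and hence are sub-exponential random variables by Borell's lemma (Theorem~\ref{thm:Borell} is its one-dimensional instance); so the inequality to be proved sits on the right scale, and the content is to match the two decay rates up to universal constants.

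\textbf{Step 1 (reduction to unconditional seminorms).} Let $\|\cdot\|$ be any seminorm on $\R^n$, with unit ball $B=\{x:\|x\|\le1\}$, a closed symmetric convex set. Put
\[
B_{\mathrm{sym}}:=\mathrm{conv}\Bigl(\bigcup_{\ve\in\{-1,1\}^n}\ve\cdot B\Bigr),\qquad\ve\cdot B:=\{(\ve_1x_1,\dots,\ve_nx_n):x\in B\}.
\]
Then $B_{\mathrm{sym}}$ is a coordinate-symmetric convex set containing $B$, so its Minkowski functional $\|\cdot\|_{\mathrm{sym}}$ is an \emph{unconditional} seminorm satisfying $\|x\|_{\mathrm{sym}}\le\|x\|$ for every $x$. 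Consequently $\p(\|X\|\ge t)\le\p(\|X\|_{\mathrm{sym}}\ge t)$ while $\p(\|\calE\|_{\mathrm{sym}}\ge t)\le\p(\|\calE\|\ge t)$, so it is enough to prove the theorem for $\|\cdot\|_{\mathrm{sym}}$: chaining these two elementary bounds around the inequality for $\|\cdot\|_{\mathrm{sym}}$ yields the inequality for $\|\cdot\|$. Equivalently, it suffices to show that for a universal constant $C$ and every coordinate-symmetric convex $K\subseteq\R^n$,
\[
\p(X\notin CK)\le C\,\p(\calE\notin K),
\]
since every such $K$ is the unit ball of an unconditional seminorm while, conversely, taking $K=tB$ and rescaling $t$ recovers the tail form.

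\textbf{Step 2 (the coordinate-symmetric comparison).} This is where unconditionality is essential. The one-dimensional prototype is elementary: a symmetric log-concave density on $\R$ puts, beyond any level $s\ge0$, at most a bounded multiple of the mass that the symmetric exponential density of the same variance puts beyond $s/C$. The $n$-dimensional statement is obtained by an induction on the dimension that exploits the unconditional structure — for a density $e^{-V}$ with $V$ convex and even in each variable, the conditional law of any single coordinate given the others has an even log-concave density on the line — so as to pass, one coordinate at a time, from $X$ to the product exponential $\calE$, each step enlarging $K$ by at most a bounded dilation and inflating $\p(\,\cdot\notin K)$ by at most a bounded factor; the density bound of Theorem~\ref{thm:Bourgain} anchors the low-dimensional cases. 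The crux, which I expect to be the main obstacle, is to run this induction so that the accumulated dilation and multiplicative loss remain \emph{bounded}, independently of $n$: a naive estimate through the $C^n$ density bound would cost a constant per coordinate and be useless, so one must use the self-improving fine structure of unconditional log-concave densities, not merely their boundedness. This dimension-free bookkeeping is the Bobkov--Nazarov mechanism of \cite{BobkovNazarovUnc} on which the quoted result of \cite{LatalaWeakDom} is built; Step~1, by contrast, is purely convex-geometric and self-contained.
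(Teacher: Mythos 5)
The decisive problem is your Step 1, and it is a genuine gap, not a presentational one. Since $B\subseteq B_{\mathrm{sym}}$, the Minkowski functional of the symmetrized body satisfies $\|x\|_{\mathrm{sym}}\le\|x\|$ (as you yourself note), hence $\p(\|X\|_{\mathrm{sym}}\ge t)\le\p(\|X\|\ge t)$: the first of your two ``elementary bounds'' is stated in the wrong direction, and with the correct direction the chaining collapses — an estimate $\p(\|X\|_{\mathrm{sym}}\ge Ct)\le C\,\p(\|\calE\|_{\mathrm{sym}}\ge t)$ gives no upper bound on $\p(\|X\|\ge Ct)$, because the symmetrized seminorm is smaller, not larger, than the original one. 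Worse, the symmetrization can erase all information: for the seminorm $\|x\|=|x_1-x_2|$ on $\R^2$, the convex hull of the sign-flipped copies of the strip $\{|x_1-x_2|\le1\}$ contains the strip $\{|x_1+x_2|\le1\}$ as well, and the midpoint of $(M,M)$ and $(M,-M)$ is $(M,0)$, so $B_{\mathrm{sym}}=\R^2$ and $\|\cdot\|_{\mathrm{sym}}\equiv0$; the ``reduced'' statement is then vacuous while the original one is not. Nor can one repair this by passing to a larger unconditional seminorm such as $\max_{\ve\in\{-1,1\}^n}\|\ve\cdot x\|$: for $\|x\|=|\sum_i x_i|$ that maximum equals $\sum_i|x_i|$, whose tails for $\calE$ live at scale $n$ while those of $|\sum_i\calE_i|$ live at scale $\sqrt n$, so the comparison back to $\|\calE\|$ fails with universal constants. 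The real content of the theorem is precisely that it holds for seminorms which do not respect the coordinate structure, so it cannot be reduced to its coordinate-symmetric case by convex-geometric massaging of the unit ball.

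Note also that the paper does not prove this statement: it quotes it from \cite{LatalaWeakDom}, built on \cite{BobkovNazarovUnc}, and the mechanism there is different from both of your steps. Bobkov and Nazarov establish a domination of the isotropic unconditional log-concave vector $X$ by a constant multiple of the exponential vector $\calE$ on coordinate ``corners'' (quadrant-type sets), and Lata{\l}a's weak-to-strong tail domination theorem upgrades that to arbitrary seminorms directly, with no intermediate reduction to unconditional norms. Your Step 2, finally, is only an outline: it acknowledges that the dimension-free bookkeeping is the crux and defers it to the same references, so even the coordinate-symmetric case is not actually established in the proposal. As it stands, the argument neither proves the theorem nor correctly reduces it to something provable; if the intent is to rely on the literature, the honest course is to cite the result, as the paper does.
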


\section{Proof of the main result by reduction to singular values
  bounds}\label{sec:reduction}

As already mentioned in the introduction, we will follow the Hermitization
method introduced by Girko, together with a Tao and Vu approach to obtain
lower bounds on the singular values. We refer the reader to \cite{BCAround}
for a presentation of this method in the general case. Let $B_n(z)$ be as in
\eqref{eq:defMnz}. Thanks to \cite[Lemma 4.3]{BCAround}, to prove that with
probability one $\nu_{n^{-1/2}A_n}$ converges weakly to the uniform measure on
the unit disc, it is enough to demonstrate the following two assertions.
\begin{itemize}
\item[(i)] For all $z \in \C$ the spectral measure $\nu_{z,n} := \nu_{B_n(z)}$
  converges almost surely to some deterministic probability measure $\nu_z$ on
  $\R_+$. Moreover, for almost all $z\in \C$,
  \[
    U(z) := -\int_{\R_+} \log (s) \nu_z(ds) =
    \left\{
      \begin{array}{cc}
        -\log |z| & \textrm{if $|z| >1$}\\
        \frac{1}{2}(1-|z|^2) & \textrm{otherwise};
      \end{array}
    \right.
  \]
\item[(ii)] For all $z \in \C$, with probability one the function $s \mapsto
  \log(s)$ is uniformly integrable with respect to the family of measures
  $\{\nu_{z,n}\}_{n\ge 1}$.
\end{itemize}

The quantity $U_n(z) := \int_{\C}\!\log\frac{1}{| \lambda -
  z|}\nu_{\frac{1}{\sqrt{n}}A_n}(d\lambda)$ is the logarithmic potential of
$\nu_{\frac{1}{\sqrt{n}}A_n}$, and we have
$U_n(z)=\int_0^\infty\!\log(s)\,\nu_{B_n(z)}(ds)$, see \cite{BCAround}.
% The idea of using the logarithmic potential to reduce the study of the
% spectral measure of non-Hermitian random matrices to that of Hermitian ones
% was introduced by Girko in \cite{G1}, and further studied in
% \cite{GTCirc,PanZhou,TV}, see also \cite{BCAround} for more details.
We will first prove point (i).

\begin{proposition}[Singular values of shifts] Assertion (i) above is true.
\end{proposition}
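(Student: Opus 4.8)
The plan is to pass to the Hermitization
\[
  H_n(z)=\begin{pmatrix} 0 & \tfrac{1}{\sqrt n}A_n-z\,\id\\[2pt] \tfrac{1}{\sqrt n}A_n^\ast-\bar z\,\id & 0\end{pmatrix},
\]
whose spectral measure $\mu_{z,n}$ is the symmetrization of $\nu_{z,n}$ (it places mass $\tfrac1{2n}$ at each of $\pm s_i(\tfrac{1}{\sqrt n}A_n-z\id)$), so that it is enough to show that $\mu_{z,n}$ converges a.s.\ to a deterministic symmetric measure $\mu_z$ and then to let $\nu_z$ be its pushforward under $|\cdot|$. Fix $\xi\in\C_+$ and put $m_n(\xi)=\tfrac1{2n}\tr(H_n(z)-\xi\,\id)^{-1}$. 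Differentiating the resolvent shows that, as a function of $A_n\in\R^{n^2}$, both $\Re m_n(\xi)$ and $\Im m_n(\xi)$ are Lipschitz with constant $O\big((\sqrt n\,(\Im\xi)^2)^{-1}\big)$; combining the Poincar\'e inequality of Theorem~\ref{thm:Poincare} with Theorem~\ref{thm:concentration} then gives
\[
  \p\big(|m_n(\xi)-\E m_n(\xi)|\ge t\big)\le C\exp\!\big(-c\,t\,(\Im\xi)^2\sqrt n/\log(n+1)\big).
\]
By Borel--Cantelli along a countable dense subset of $\C_+$, together with the fact that the functions $m_n$ are analytic and locally uniformly bounded on $\C_+$ (Montel), this yields $m_n(\xi)-\E m_n(\xi)\to0$ almost surely, locally uniformly on $\C_+$. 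Everything is thus reduced to identifying $\lim_n\E m_n(\xi)$.

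To identify this limit I would show that $\E m_n(\xi)$ -- more precisely the expectations of the relevant $2$-block resolvent traces -- satisfies, up to an $o(1)$ error, the same self-consistent (quadratic) equation as in the classical i.i.d.\ case, whose solution determines $m_{\mu_z}(\xi)$ with $\mu_z$ the known limiting symmetrized singular value distribution of $\tfrac{1}{\sqrt n}(\text{i.i.d.\ unit variance})-z\,\id$; the uniqueness and stability of this equation are standard, cf.\ \cite{BCAround}. The equation is produced by the usual Schur complement expansion of $\tr(H_n(z)-\xi\,\id)^{-1}$, peeling off one row of $A_n$ (and the conjugate column) at a time. Two quantities must be controlled at step $i$: (a) the quadratic form $\langle R_i,\Gamma R_i\rangle$, where $R_i$ is the $i$-th row of $A_n$ and $\Gamma$ is a resolvent of the corresponding minor, which has to be shown to be close to $\tfrac1n\tr\Gamma$; and (b) invertibility of the minors and an a priori operator-norm control, for which $\|A_n\|\le C\sqrt n$ with overwhelming probability -- a consequence of Theorem~\ref{thm:Latala} and the Bai--Yin bound for matrices with i.i.d.\ symmetric exponential entries -- together with crude resolvent bounds, is enough.

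The essential difficulty, and the place where unconditionality and log-concavity are used, is (a): the rows $R_1,\dots,R_n$ are only uncorrelated, not independent, so $R_i$ is not independent of the minor. I would handle this by conditioning on all rows but the $i$-th. The conditional law of $R_i$ is again log-concave and unconditional -- it is a coordinate slice of a log-concave unconditional density -- though no longer isotropic. Unconditionality forces the conditional first moment of $R_i$ and all conditional mixed second moments $\E[(R_i)_k(R_i)_l\mid\cdot]$ with $k\ne l$ to vanish, so the conditional mean of $\langle R_i,\Gamma R_i\rangle$ is $\sum_k\E[(R_i)_k^2\mid\cdot]\,\Gamma_{kk}$, and $\tfrac1n\sum_k\E[(R_i)_k^2\mid\cdot]=\E[\tfrac1n|R_i|^2\mid\cdot]\to1$ in a suitable averaged sense, by isotropy and the sub-exponential tail of Theorem~\ref{thm:Borell}. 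For the fluctuation of $\langle R_i,\Gamma R_i\rangle$ about this conditional mean, and for the decoupling step needed to replace $\Gamma$ by a version independent of $R_i$ (a rank-one perturbation of the minor), I would invoke Hanson--Wright type concentration and small ball estimates for log-concave unconditional vectors that hold uniformly over the conditioning; these follow from the density bound of Theorem~\ref{thm:Bourgain} (in its covariance-normalized form) and from Theorem~\ref{thm:Borell}, at the cost of only powers of $\log n$, which is harmless. I expect this conditional quadratic-form analysis to be the main obstacle; once it is in place the self-consistent equation closes up to $o(1)$, hence $\E m_n(\xi)\to m_{\mu_z}(\xi)$, and therefore $\nu_{z,n}\to\nu_z$ almost surely.

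Finally, since $\nu_z$ is now identified with the limiting (shifted Ginibre) singular value distribution, the stated formula for $U(z)=-\int_{\R_+}\log(s)\,\nu_z(ds)$ -- equal to $-\log|z|$ for $|z|>1$ and to $\tfrac12(1-|z|^2)$ otherwise, for almost every $z$ -- follows from the explicit computation already carried out in \cite{BCAround} (for instance by integrating the limiting Cauchy--Stieltjes transform), and requires no new ingredient.
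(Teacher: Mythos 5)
The second half of your argument --- Lipschitz dependence of the Cauchy--Stieltjes transform on the matrix entries, concentration via Theorems \ref{thm:Poincare} and \ref{thm:concentration}, Borel--Cantelli plus local uniformity to upgrade to almost sure convergence --- is exactly the paper's argument (the paper works with $m_{\nu_{z,n}}$ and the Hoffman--Wielandt inequality rather than the $2n\times 2n$ Hermitization and resolvent differentiation, but this is cosmetic). The problem is the first half: the identification of $\lim_n \E m_n(\xi)$. There you do not give a proof but a programme, and the programme's central step is precisely the hard point. After conditioning on the other rows, the conditional law of $R_i$ is indeed log-concave and unconditional, but it is \emph{not} isotropic: the conditional variances $\sigma_k^2=\E[(R_i)_k^2\mid (R_j)_{j\ne i}]$ are random, and the conditional mean of the quadratic form is $\sum_k\sigma_k^2\Gamma_{kk}$, where both $\sigma_k^2$ and the diagonal resolvent entries $\Gamma_{kk}$ of the minor are functions of the same conditioning variables. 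Your claim that this is close to $\frac1n\tr\Gamma$ ``by isotropy and the sub-exponential tail'' does not follow: one needs to rule out a bias coming from the correlation between $(\sigma_k^2)_k$ and $(\Gamma_{kk})_k$, and neither Theorem \ref{thm:Borell} nor Theorem \ref{thm:Bourgain} gives this. Likewise, the ``Hanson--Wright type concentration \dots uniformly over the conditioning'' that you invoke for the fluctuations is not a consequence of the tools listed in Section \ref{sec:toolbox} (Theorem \ref{thm:Poincare} requires isotropy, which the conditional law lacks, and quadratic forms are not globally Lipschitz), and you explicitly defer it (``I expect this \dots to be the main obstacle''). As it stands, the limit $\E m_n(\xi)\to m_{\nu_z}(\xi)$ is therefore not established.

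For comparison, the paper does not rederive the self-consistent equation at all: it checks the one hypothesis that an existing result requires, namely that the Euclidean norm of a row/column of $A_n$, normalized by $\sqrt n$, concentrates around $1$ (an immediate consequence of Theorems \ref{thm:Poincare} and \ref{thm:concentration}, since the norm is $1$-Lipschitz), and then quotes \cite[Theorem 2.4]{CircLawUnc} to obtain convergence of the expected measures $\E\nu_{z,n}$ to the deterministic $\nu_z$, identifying $\nu_z$ and the formula for $U(z)$ by comparison with the Gaussian case as in \cite{BCAround}. If you want a self-contained route along your lines, you would essentially have to reprove that cited theorem, which means carrying out in full the conditional quadratic-form analysis you only sketch; the quickest repair of your write-up is to replace Schur-complement part by the verification of the row-norm concentration hypothesis and the citation of \cite{CircLawUnc}, keeping your (correct) concentration and Borel--Cantelli upgrade.
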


\begin{proof}
  Let us fix $z\in\C$. From Theorem \ref{thm:Poincare} and Theorem
  \ref{thm:concentration} it follows easily that the Euclidean length of a
  random row/column of $A_n$, normalized by $\sqrt{n}$, converges in
  probability to one. Thus we deduce by \cite[Theorem 2.4]{CircLawUnc} that
  the expected spectral measure $\E \nu_{z,n}$ converges weakly, as
  $n\to\infty$, to a probability measure $\nu_z$ which depends only on $z$
  (the identification of $\nu_z$ and the formula involving $U$ can be then
  done and checked on the case of i.i.d.\ Gaussian entries, see for instance
  \cite{BCAround}). The rest of our proof is now devoted to the upgrade to
  almost sure convergence, by using concentration of measure for the
  Cauchy-Stieltjes transform. For now, we know that for every $\xi\in\C_+$,
  \[
  \E m_{\nu_{z,n}}(\xi) %
  = \E \int_{\R_+} \frac{1}{\lambda - \xi} \nu_{z,n}(d\lambda) %
  = m_{\E \nu_{z,n}}(\xi) %
  \underset{n\to\infty}{\longrightarrow} m_{\nu_z}(\xi).
  \]
  At this step, we observe that if $C$ and $C'$ are in $\mathcal{M}_n(\C)$
  with singular values $s_1\geq\cdots\geq s_n$ and $s_1'\geq\cdots\geq s_n'$
  respectively then for every $\xi\in\C_+$,
  \begin{align*}
    |m_{\nu_{\sqrt{CC^*}}}(\xi)-m_{\nu_{\sqrt{C'C'^*}}}(\xi)| &=
    \Big|\frac{1}{n}\sum_{i=1}^{n} \frac{1}{s_i - \xi} -
    \frac{1}{n}\sum_{i=1}^{n}\frac{1}{s_i'-\xi}\Big| \\%
    &\le \frac{1}{n}\sum_{i=1}^{n}%
    \Big|\frac{s_i- s_i'}{(s_i - \xi)(s_i'-\xi)}\Big| \\
    &\le\frac{1}{n|\Im \xi|^2}\sum_{i=1}^{n}|s_i-s_i'|\\
    &\le\frac{1}{\sqrt{n}|\Im \xi|^2}\sqrt{\sum_{i=1}^{n}|s_i-s_i'|^2}\\
    &\le\frac{1}{\sqrt{n}|\Im \xi|^2}\|C-C'\|_{\mathrm{HS}},
  \end{align*}
  where the last step follows from the Hoffman-Wielandt inequality for
  singular values, see for instance \cite[Chapter 4]{lamabook}.
  % or \cite[Chapte 2]{AGZ}). % Lemma 2.1.19 and Remark 2.1.20 in \cite{AGZ}
  Thus both the real and the imaginary parts of $m_{\nu_{z,n}}$ are $1/(n(\Im
  \xi)^2)$ Lipschitz functions of $A_n$, with respect to the Hilbert-Schmidt
  norm, which is the Euclidean norm on $\mathcal{M}_n(\R)\equiv\R^{n^2}$.
  Therefore, by Theorem \ref{thm:Poincare} and Theorem
  \ref{thm:concentration}, we get, for every $\xi\in\C_+$ and $\varepsilon >
  0$,
  \[
  \p(|m_{\nu_{z,n}}(\xi) - \E m_{\nu_{z,n}}(\xi)| \ge\varepsilon) %
  \le 2\exp(-cn\varepsilon\Im(z)^2/\log(n)).
  \]
  Now by the first Borel-Cantelli lemma, with probability one,
  $m_{\nu_{z,n}}(\xi) - \E m_{\nu_{z,n}}(\xi)\to0$ as $n\to\infty$ (the set of
  probability one depends on $\xi$). Since $\E m_{\nu_{z,n}}(\xi)\to
  m_{\nu_{z}}(\xi)$ as $n\to\infty$, we get that with probability one,
  $m_{\nu_{z,n}}(\xi)\to m_{\nu_z}(\xi)$ as $n\to\infty$. Since the
  Cauchy-Stieltjes transform is uniformly continuous on every compact subset
  of $\C_+$, it follows that with probability one, for every $\xi\in\C_+$,
  $m_{\nu_{z,n}}(\xi)\to m_{\nu_z}(\xi)$ as $n\to\infty$, which implies
  finally that with probability one, $\nu_{z,n}$ converges weakly to $\nu_z$
  as $n\to\infty$.
\end{proof}

To finish the proof of Theorem \ref{thm:main} it is thus enough to demonstrate
(ii). We will do this using the following three lemmas which give bounds on
singular values of the matrix $\frac{1}{\sqrt{n}}A_n - z\id$. The proofs of
the lemmas will be deferred to the next section. Let us remark that the
formulations we present are in fact more general then what is needed for the
proof of Theorem \ref{thm:main}. We also recall that by $C,c$ we denote
absolute constants.

The first lemma estimates the operator norm of the matrix (largest singular
value).

\begin{lemma}[Largest singular value]\label{le:operator_norm}
  Let $A_n$ be an $n\times n$ random matrix with log-concave isotropic
  unconditional distribution and let $M_n$ be a deterministic $n\times n$
  matrix with $\|M_n\| \le R\sqrt{n}$ for some $R > 0$. Then for all $t \ge
  1$,
  \[
  \p(\|A_n + M_n\| \ge (R+C)\sqrt{n} + t) \le 2\exp(-ct).
  \]
\end{lemma}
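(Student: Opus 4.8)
The plan is to dispose of the deterministic shift by subadditivity, transfer the remaining estimate to a matrix with i.i.d.\ symmetric exponential entries via Theorem \ref{thm:Latala}, and then invoke a sharp bound on the operator norm of such a matrix together with a sub-exponential concentration inequality.

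First, since $\|A_n+M_n\|\le\|A_n\|+\|M_n\|\le\|A_n\|+R\sqrt n$, it suffices to find absolute constants $C,c>0$ with $\p(\|A_n\|\ge C\sqrt n+t)\le 2e^{-ct}$ for all $t\ge1$. The operator norm is a seminorm on $\mathcal M_n(\R)\equiv\R^{n^2}$, and by hypothesis $A_n$ is isotropic, unconditional and log-concave there, so Theorem \ref{thm:Latala} yields $\p(\|A_n\|\ge Ct)\le C\,\p(\|\mathcal E\|\ge t)$ for every $t>0$, where $\mathcal E$ is the $n\times n$ random matrix whose $n^2$ entries are i.i.d.\ symmetric exponential of variance one. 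The problem thus reduces to the estimate
\[
\p\bigl(\|\mathcal E\|\ge C\sqrt n+s\bigr)\le 2e^{-cs},\qquad s\ge 0,
\]
since, taking $t=C\sqrt n+s$ above, the multiplicative constant produced by Theorem \ref{thm:Latala} can be absorbed into the exponent once $t\ge1$---which is precisely why the lemma is stated only for $t\ge1$.

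To estimate $\|\mathcal E\|$ I would split into a bound on $\E\|\mathcal E\|$ and a concentration inequality. For the mean I would invoke Seginer's bound $\E\|\mathcal E\|\le C\,\E\bigl(\max_i\|R_i\|_2+\max_j\|C_j\|_2\bigr)$, with $R_i$, $C_j$ the rows and columns of $\mathcal E$. Each $R_i$ (and each $C_j$) is a vector in $\R^n$ with i.i.d.\ symmetric exponential variance-one coordinates; $x\mapsto\|x\|_2$ is $1$-Lipschitz; and the $n$-fold product of the one-dimensional symmetric exponential law satisfies a Poincar\'e inequality with a \emph{universal} constant (tensorisation of the one-dimensional inequality). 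Hence Theorem \ref{thm:concentration} together with $\E\|R_i\|_2\le(\E\|R_i\|_2^2)^{1/2}=\sqrt n$ gives $\p(\|R_i\|_2\ge\sqrt n+s)\le 2e^{-cs}$, and a union bound over the rows and, separately, the columns yields $\E\|\mathcal E\|\le C(\sqrt n+\log n)\le C'\sqrt n$. For the concentration of $\|\mathcal E\|$ itself, $M\mapsto\|M\|$ is $1$-Lipschitz for the Hilbert--Schmidt norm---the Euclidean norm on $\R^{n^2}$---and the $n^2$-fold product of the symmetric exponential law again has a universal Poincar\'e constant, so Theorem \ref{thm:concentration} gives $\p(\|\mathcal E\|\ge\E\|\mathcal E\|+s)\le 2e^{-cs}$; combining this with the mean bound proves the displayed estimate, and hence the lemma.

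The main obstacle is the sharp bound $\E\|\mathcal E\|\le C\sqrt n$. A direct $\varepsilon$-net argument is too lossy: a net on $S^{n-1}\times S^{n-1}$ has cardinality $e^{\Theta(n)}$, while for a fixed pair $(x,y)$ the Bernstein estimate for $\langle\mathcal Ex,y\rangle$ only decays like $e^{-cu}$ in the relevant range, so the union bound would control $\|\mathcal E\|$ only at scale $n$ rather than $\sqrt n$; similarly, matrix Bernstein introduces a spurious $\sqrt{\log n}$ factor. One therefore needs a genuinely structural input such as Seginer's inequality (or the moment method). A secondary point to watch is that the concentration steps must be run with the universal Poincar\'e constant of the exponential product measure rather than the $\log^2 n$ constant of Theorem \ref{thm:Poincare}, since otherwise the tail in the lemma would be degraded by a factor $\log n$ in the exponent.
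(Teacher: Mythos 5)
Your proof is correct and follows essentially the same route as the paper: triangle inequality for $M_n$, reduction to an i.i.d.\ symmetric exponential matrix via Theorem \ref{thm:Latala}, the known bound $\E\|\mathcal{E}\|\le C\sqrt{n}$, and sub-exponential concentration of the operator norm from the dimension-free Poincar\'e inequality of the exponential product measure. The only (immaterial) difference is that the paper simply cites Lata{\l}a's norm estimate \cite{LatalaMatrixNorm} for the expected norm, whereas you rederive it from Seginer's inequality plus concentration of row and column norms.
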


Our next lemma provides a bound on the smallest singular value.

\begin{lemma}[Smallest singular value] \label{le:smallest_sv}Let $A_n$ be an
  $n\times n$ random matrix with log-concave isotropic unconditional
  distribution and let $M_n$ be a deterministic $n\times n$ matrix. Then
  \[
  \p(s_n(A_n+M_n) \le  n^{-6.5}) \le Cn^{-3/2}.
  \]
\end{lemma}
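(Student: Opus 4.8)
The plan is to bound $s_n(A_n+M_n)$ from below through the distances from the columns of $B:=A_n+M_n$ to the spans of the remaining columns, and then to control each such distance by conditioning on all the other columns and exploiting the unconditionality of the conditional law of the remaining one. Write $R_1,\dots,R_n$ for the columns of $B$ and set $H_k=\SPAN\{R_j:j\ne k\}$. Since $A_n$ has a density, $B$ is invertible and the $R_j$, $j\ne k$, are linearly independent almost surely; comparing $B^{-1}B$ with $\id$ shows that the $k$-th row of $B^{-1}$ has Euclidean norm $\DIST(R_k,H_k)^{-1}$, so that
\[
s_n(B)^{-2}=\|B^{-1}\|^2\le\|B^{-1}\|_{\mathrm{HS}}^2=\sum_{k=1}^n\DIST(R_k,H_k)^{-2}.
\]
Hence $s_n(B)\le n^{-6.5}$ forces $\sum_k\DIST(R_k,H_k)^{-2}\ge n^{13}$, and therefore $\DIST(R_k,H_k)\le n^{-6}$ for at least one $k$; by a union bound it thus suffices to prove $\p(\DIST(R_k,H_k)\le\varepsilon)\le Cn\varepsilon\log(2/\varepsilon)$ for every $k$, which we will apply with $\varepsilon=n^{-6}$.

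Fix $k$ and condition on the $\sigma$-algebra $\mathcal G_k$ generated by the entries $X_{ij}$ with $j\ne k$. Then $H_k$, and a measurable choice of unit normal $\widehat w=\widehat w(\mathcal G_k)$ to $H_k$, are $\mathcal G_k$-measurable, and $\DIST(R_k,H_k)=|\langle\widehat w,X_{\cdot k}\rangle-t_k|$ with $X_{\cdot k}=(X_{ik})_{i=1}^n$ and $t_k:=-\langle\widehat w,(M_n)_{\cdot k}\rangle$. The conditional law of $X_{\cdot k}$ given $\mathcal G_k$ has density proportional to $x\mapsto e^{-V(\dots,x,\dots)}$, where $V$ is the convex potential of $A_n$; unconditionality of $A_n$ means that $V$ is even in each of its variables, so this conditional density is even in each coordinate, i.e.\ the conditional law of $X_{\cdot k}$ is again log-concave and unconditional on $\R^n$. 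In particular its conditional covariance matrix is diagonal with entries $\tau_{ik}^2:=\Var(X_{ik}\mid\mathcal G_k)$, so that conditionally on $\mathcal G_k$ the variable $\langle\widehat w,X_{\cdot k}\rangle$ is symmetric and log-concave with variance $\sigma_k^2=\sum_i\widehat w_i^2\tau_{ik}^2\ge\min_i\tau_{ik}^2$. Since the density of a symmetric log-concave real variable of variance $v$ is bounded by $C/\sqrt v$, it follows that
\[
\p\big(\DIST(R_k,H_k)\le\varepsilon\bigm|\mathcal G_k\big)\le\frac{C\varepsilon}{\sigma_k}.
\]

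The crucial estimate, and the point where log-concavity is really used, is the following left-tail bound on the conditional variances: for all $i,k$ and all $s>0$,
\[
\p\big(\tau_{ik}\le s\big)\le Cs.
\]
Indeed, conditionally on $\mathcal G_k=r$, the coordinate $X_{ik}$ has a symmetric log-concave density $\varphi_r$ of variance $\tau_{ik}(r)^2$, whence $\varphi_r(0)\le C/\tau_{ik}(r)$; on the other hand $\varphi_r(0)=g_i(r)/q(r)$, where $q$ is the density of the $\mathcal G_k$-coordinates and $g_i(r)$ is the value at $(0,r)$ of the joint density of $X_{ik}$ and those coordinates. Consequently $\{\tau_{ik}\le s\}$ is contained, up to a null set, in $\{g_i(r)\ge q(r)/(Cs)\}$, and integrating $q$ over that set gives
\[
\p(\tau_{ik}\le s)\le\int\ind{g_i(r)\ge q(r)/(Cs)}\,q(r)\,dr\le Cs\int g_i(r)\,dr=Cs\,f_{X_{ik}}(0)\le C's,
\]
since $\int g_i(r)\,dr$ is the value at $0$ of the one-dimensional marginal density of $X_{ik}$, which is symmetric, log-concave and of variance one, hence bounded by a universal constant.

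Putting the pieces together, and using $\p(\sigma_k<u)\le\p(\exists\,i:\tau_{ik}<u)\le Cnu$, we obtain
\[
\p(\DIST(R_k,H_k)\le\varepsilon)\le\p(\sigma_k\le2\varepsilon)+C\varepsilon\,\E\big[\sigma_k^{-1}\Ind{\{\sigma_k>2\varepsilon\}}\big]\le 2Cn\varepsilon+C\varepsilon\int_0^{1/(2\varepsilon)}\min\big(1,Cn/t\big)\,dt\le C'n\varepsilon\log(2/\varepsilon).
\]
Summing over $k$ and taking $\varepsilon=n^{-6}$ gives $\p(s_n(A_n+M_n)\le n^{-6.5})\le C n^{-4}\log n$, which is bounded by $Cn^{-3/2}$ after adjusting the constant; in fact this is much stronger than the stated inequality. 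The main obstacle is the conditional-variance estimate $\p(\tau_{ik}\le s)\le Cs$; everything else is classical linear algebra plus elementary one-dimensional facts on log-concave densities. Note finally that the argument uses no hypothesis on $M_n$: it never invokes a bound on the operator norm of $A_n+M_n$.
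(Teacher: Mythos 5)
Your overall strategy is essentially the paper's: reduce the smallest singular value to distances from one line of the matrix to the span of the remaining ones (you do this on columns via the negative second moment identity, the paper on rows via the Rudelson--Vershynin bound -- this difference is immaterial, since the hypotheses are invariant under transposition), then condition on the complementary entries, use unconditionality to make the conditional law of the remaining column symmetric, log-concave, with diagonal conditional covariance, and control both the small-ball probability of the linear form $\langle\widehat w,X_{\cdot k}\rangle$ and the left tail of the conditional variances by one-dimensional log-concave density bounds. Your treatment of the small conditional variances (density-ratio argument plus integrating the tail of $\sigma_k^{-1}$, at the cost of a harmless logarithm) is a workable alternative to the paper's shortcut, which notes that $\E(X^2\mid Y)\le\varepsilon^2$ forces $\p(|X|\le2\varepsilon\mid Y)\ge 3/4$ and hence $\p(\sigma_k^2\le\varepsilon^2)\le\tfrac43\p(|X_{ik}|\le2\varepsilon)\le C\varepsilon$, using only the upper bound on the marginal density.

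Two points need repair. First, and most substantively, your argument is written entirely over $\R$: you take $\widehat w$ with $\sum_i\widehat w_i^2=1$ and treat $\langle\widehat w,X_{\cdot k}\rangle$ as a real symmetric log-concave variable. But the lemma is applied with $M_n=-\sqrt n\,z\,\id$ for $z\in\C$, so $H_k$ is a complex subspace, $\widehat w\in\C^n$, and $\langle\widehat w,X_{\cdot k}\rangle$ is complex; your variance computation and symmetry claim do not apply as stated (your closing remark that no hypothesis on $M_n$ is used overlooks the implicit realness assumption). The paper fixes exactly this by writing $e'=\Re e$, $e''=\Im e$, noting that one of them has norm at least $2^{-1/2}$, and running the one-dimensional argument with the real form $\langle X_{\cdot k},e'\rangle$, losing only a factor $\tfrac12$ in the variance bound; you need to insert the same step. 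Second, in your ``crucial estimate'' the density inequality is quoted in the wrong direction: to pass from $\tau_{ik}(r)\le s$ to $g_i(r)\ge c\,q(r)/s$ you need the lower bound $\varphi_r(0)\ge c/\tau_{ik}(r)$ (true for symmetric log-concave densities: the value at the center times the standard deviation is bounded below by a universal constant), not the upper bound $\varphi_r(0)\le C/\tau_{ik}(r)$ you state. The needed fact is standard, so this is a local fix -- or you can avoid it altogether with the Markov-inequality trick above. With these two corrections your proof is sound and gives a bound ($Cn^{-4}\log n$) stronger than the stated $Cn^{-3/2}$.
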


\begin{remark} The above lemma is certainly suboptimal, but it is sufficient
  for our applications. In view of the results for Gaussian matrices
  \cite{SSTSmoothed} it is natural to conjecture that for $\varepsilon \in
  (0,1)$, $\p(s_n(A_n+M_n) \le \varepsilon n^{-1/2}) \le C\varepsilon$. For a
  matrix $A_n$ with independent log-concave isotropic columns it is proven in
  \cite{AGLPT2} that $\p(s_n(A_n) \le \varepsilon n^{-1/2}) \le C\varepsilon
  \log^2(2/\varepsilon)$.
\end{remark}

The next lemma we will need gives a bound on the singular values $s_{n-i}$,
where $i > n^{\gamma}$ for some $\gamma \in (0,1)$. It is analogous to an
estimate in \cite{TV} (see also \cite{BCAround}) used to prove the circular
law in the i.i.d. case under minimal assumptions. The main difficulty in its
proof in our setting is lack of independence between the rows of $A_n$, which
has to be replaced by unconditionality and geometric properties implied by
log-concavity.

\begin{lemma}[Count of small singular values]\label{le:small_sv}
  Let $A_n$ be an $n\times n$ random matrix with log-concave isotropic
  unconditional distribution and $M_n$ a deterministic $n\times n$ matrix with
  $\|M_n\| \le R\sqrt{n}$. Let also $\gamma \in (0,1)$. Then for every
  $n^{\gamma} \le i \le n-1$,
  \[
  \p\Big(s_{n-i}(n^{-1/2}(A_n + M_n)) %
  \le c_R \frac{i}{n}\Big) \le 2\exp(-c_{R,\gamma} n^{\gamma/3}).
  \]
\end{lemma}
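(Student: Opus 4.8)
The plan is to follow the negative-second-moment / distance strategy used by Tao--Vu in the i.i.d.\ case (as presented in \cite{BCAround}), but replacing the independence of the rows of $A_n$ by the density bound of Theorem \ref{thm:Bourgain} and the concentration and comparison results of Section \ref{sec:toolbox}. Write $B = n^{-1/2}(A_n+M_n)$. The starting point is the classical identity bounding the number of small singular values by a sum of reciprocal squared distances: if $R_1,\dots,R_n$ are the rows of $B$ and $H_k$ is the span of all rows except $R_k$, then
\[
\sum_{j=1}^{n} \frac{1}{s_j(B)^2} = \sum_{k=1}^{n} \frac{1}{\DIST(R_k,H_k)^2}.
\]
Since $s_{n-i}(B)^{-2} \le \frac{1}{i}\sum_{j \ge n-i} s_j(B)^{-2} \le \frac{1}{i}\sum_{k} \DIST(R_k,H_k)^{-2}$, it suffices to show that with the stated probability $\sum_k \DIST(R_k,H_k)^{-2} \le c_R^{-1} n^3/i$, which after rescaling reduces to proving a lower bound of order $i/\sqrt n$ (up to constants) for most of the distances $\DIST(R_k,H_k)$.

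The core estimate will therefore be: for a fixed $k$, conditionally on all rows except $R_k$, with overwhelming probability $\DIST(R_k, H_k) \ge c_R\, i/\sqrt n$, provided $\mathrm{codim}(H_k) \ge c i$ — which happens with high probability because, by Lemma \ref{le:operator_norm}, $\|B\| = O(\sqrt n \cdot n^{-1/2}) = O(1)$ forces $B$ to have at most $O(n/ ?)$... more precisely one uses that a bounded-norm matrix cannot have too few small singular values; alternatively one argues directly on a deterministic subspace of codimension $\ge c i$ obtained from the column structure. Here is where unconditionality enters: conditionally on the other rows, $R_k = n^{-1/2}(X_k^\un + m_k)$ where $X_k^\un$ is the $k$-th row of $A_n$; by unconditionality and the fact that conditional laws of log-concave unconditional vectors remain log-concave (linear images / sections), the conditional law of $X_k^\un$ is log-concave with a density bounded by $C^n$ via Theorem \ref{thm:Bourgain}. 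Projecting onto the orthogonal complement $H_k^\perp$, which has dimension $d := \mathrm{codim}(H_k) \ge c i \ge c n^\gamma$, the projected vector $P_{H_k^\perp} X_k^\un$ has a log-concave density on $H_k^\perp \cong \R^d$ bounded by $C^d$ (a marginal of a log-concave density is log-concave, and the bound is inherited because the conditional density was already bounded by $C^n$ and we may integrate out $n-d$ coordinates losing at most a $C^{n-d}$-type factor controlled by the isotropy-type normalization — this is exactly the small-ball mechanism alluded to after Theorem \ref{thm:Bourgain}). Hence the small-ball estimate
\[
\p\big(|P_{H_k^\perp} X_k^\un| \le \sqrt d\,\rho \ \big|\ H_k\big) \le (C\rho)^{d}
\]
holds, and taking $\rho$ a small absolute constant yields $\DIST(R_k,H_k) = n^{-1/2}|P_{H_k^\perp} X_k^\un| \ge c \sqrt d /\sqrt n \ge c' i/n \cdot \sqrt{?}$; optimizing the exponents, the bound $\DIST(R_k,H_k) \ge c_R i/n$ fails with probability at most $e^{-c d} \le e^{-c n^\gamma}$ for each $k$.

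The final step is a union bound over $k=1,\dots,n$ together with the event $\|B\|\le R+C$ from Lemma \ref{le:operator_norm} (needed both to control $\|M_n\|$-dependence through the deterministic shifts $m_k$ and to guarantee the codimension lower bound): the exceptional probability is at most $n e^{-c n^\gamma} + 2 e^{-c n} \le 2\exp(-c_{R,\gamma} n^{\gamma/3})$, absorbing the polynomial factor and the $n^\gamma$-versus-$n^{\gamma/3}$ slack to keep the statement clean. The main obstacle is the second-to-last paragraph: making rigorous the claim that the conditional density of the relevant projection of $X_k^\un$ is bounded by $C^{d}$ \emph{uniformly} over the (random) subspace $H_k$ and the (random) conditioning, since the conditional covariance structure of an unconditional log-concave vector need not be isotropic on $H_k^\perp$ — one must combine the $C^n$ density bound (Theorem \ref{thm:Bourgain}) with a volumetric argument and a crude a priori lower bound on $\E\DIST(R_k,H_k)^2$ (coming from $\E|X_k^\un|^2 = n$ and $\tr(P_{H_k^\perp}) = d$) to rule out degenerate directions; controlling the event where $B$ has anomalously many small singular values (so that $d$ is too small) via Lemma \ref{le:operator_norm} and a net argument is the secondary difficulty.
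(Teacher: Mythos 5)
Your plan has a genuine structural gap at its very first step. You apply the negative second moment identity to the full $n\times n$ matrix $B=n^{-1/2}(A_n+M_n)$, so each $H_k$ is the span of the other $n-1$ rows; since the law of $A_n$ has a density, this span has dimension $n-1$ almost surely, i.e.\ $\mathrm{codim}(H_k)=1$, not $\ge ci$. The operator norm bound from Lemma \ref{le:operator_norm} cannot change this, and the typical distance to a hyperplane is $O(1)$ (indeed it can be polynomially small, which is exactly what Lemma \ref{le:smallest_sv} controls), so $\sum_k \DIST(R_k,H_k)^{-2}$ is far too large to yield $s_{n-i}(B)\ge c_R i/n$ for all $i\ge n^\gamma$. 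The missing idea is the Tao--Vu truncation used in the paper: set $k=n-\lfloor i/2\rfloor$, keep only the first $k$ rows to form a rectangular matrix $B_n'$, use Cauchy interlacing $s_{n-j}(A_n+M_n)\ge s_{n-j}(B_n')$ for $j\ge\lfloor i/2\rfloor$, and apply the negative second moment identity to $B_n'$, where each row is compared to the span of only $k-1$ rows, so the codimension $\ge n-k+1\approx i/2$ is automatic and the needed distance lower bound is of order $\sqrt{n-k}\approx\sqrt{i}$. Your alternative suggestion (``a deterministic subspace of codimension $\ge ci$ obtained from the column structure'') is not a substitute, because the identity requires the distance to the actual span of the remaining rows.

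The second gap is the core distance estimate itself, which you acknowledge but do not resolve. Projecting the conditioned row onto the random subspace $H_k^\perp$ destroys unconditionality, the conditional law of a row given the others is not isotropic, and a sup-density bound $C^{n}$ on a joint density does not pass to a $C^{d}$ bound for marginals or projections without isotropy control; so the claimed conditional small-ball bound $\p(|P_{H_k^\perp}X_k|\le\sqrt{d}\,\rho\mid H_k)\le(C\rho)^d$ is unjustified as stated. The paper's Lemma \ref{le:distance} circumvents precisely this: it conditions on the Rademacher signs (which are genuinely independent of the $|X_{ij}|$ profile by unconditionality), applies Talagrand's concentration on the discrete cube and Khintchine--Kahane to reduce to a lower bound on $\sum_j X_{k+1,j}^2\sum_i|e_{ij}|^2$, and then shows the orthonormal basis $(e_i)$ of $K^\perp$ consists of incompressible vectors via a net over sparse vectors combined with coordinate-subspace small-ball estimates (where Theorem \ref{thm:Bourgain} does apply, since coordinate marginals remain unconditional and isotropic), plus a count of coordinates with $|X_{k+1,j}|\ge\rho$ and the sub-exponential bound of Theorem \ref{thm:Borell} for the Lipschitz constant. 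Without the truncation/interlacing step and without a rigorous replacement for the projected small-ball estimate, the proposal does not prove the lemma.
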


Let us now finish the proof of Theorem \ref{thm:main}, by demonstrating how
the above lemmas imply point (ii). By Markov's inequality it is enough to show
that for every $z \in \C$, for some small $\alpha > 0$ with probability one
\[
\varlimsup_{n\to\infty} %
\int_0^\infty\!(s^{\alpha} + s^{-\alpha})\nu_{z,n}(ds) < \infty,
\]
or in other words, denoting $s_i := s_i(\frac{1}{\sqrt{n}} A_n - z\id)$,
\[
\varlimsup_{n\to\infty}\frac{1}{n} \sum_{i=1}^n( s_i^\alpha +
s_i^{-\alpha})<\infty.
\]
For all $\alpha \le 2$ we have, for $n$ large enough,
\[
n^{-1}\sum_{i=1}^n s_i^\alpha \le (n^{-1} \sum_{i=1}^n s_i^2)^{\alpha/2} =
(n^{-1/2}\|n^{-1/2} A_n - z\id\|_{\mathrm{HS}})^{\alpha} \le (\|n^{-1/2}A_n -
z\id\|)^\alpha.
\]
But by Lemma \ref{le:operator_norm} and the Borel-Cantelli lemma, with
probability one, we have the bound $\varlimsup_{n\to \infty} \|n^{-1/2}A_n -
z\id\| < C_z$ for some finite constant $C_z$, and thus
\[
\varlimsup_{n\to\infty} \frac{1}{n} \sum_{i=1}^n s_i^\alpha < \infty.
\]

Passing to the other sum, for $\alpha, \gamma$ small enough, by Lemmas
\ref{le:smallest_sv}, \ref{le:small_sv} and the Borel-Cantelli lemma we have
with probability one, for some finite constants $C_{z}$ and
$C_{z,\alpha}$,
\begin{align*}
  \frac{1}{n} \sum_{i=1}^n s_{i}^{-\alpha} &= \frac{1}{n}\sum_{i = 0}^{\lfloor
    n^\gamma\rfloor } s_{n-i}^{-\alpha} %
  + \frac{1}{n} \sum_{i = \lfloor n^\gamma\rfloor +1}^{n-1} s_{n-i}^{-\alpha}\\
  &\le \frac{1}{n}n^{7\alpha} n^{\gamma} %
  + \frac{1}{n}C_{z} \sum_{i=\lfloor n^\gamma\rfloor +1}^{n-1} %
  \Big(\frac{n}{i}\Big)^\alpha \\
  &\le n^{7\alpha + \gamma - 1} + C_{z,\alpha} n^{\alpha - 1}
  n^{1-\alpha} = \mathcal{O}(1).
\end{align*}

This implies (ii) and ends the proof of Theorem \ref{thm:main}.

\section{Proof of singular values bounds}\label{sec:proofs_of_lemmas}

We will start with the proof of Lemma \ref{le:operator_norm}.

\begin{proof}[Proof of Lemma \ref{le:operator_norm}] By the triangle
  inequality it is enough to estimate $\|A_n\|$. By Theorem \ref{thm:Latala}
  we can assume that $A_n$ has i.i.d. entries with the standard symmetric
  exponential distribution. It is well known (see e.g.
  \cite{LatalaMatrixNorm}) that in this case $\E\|A_n\| \le C\sqrt{n}$.
  Moreover, by the Poincar\'e inequality for the product of symmetric
  exponential measures (see e.g. \cite{logSob}) and the fact that the operator
  norm is $1$-Lipschitz with respect to the Hilbert-Schmidt norm, we have
  $\p(\|A_n\| \ge C\E\|A_n\| + t) \le \exp(-ct)$, which allows to finish the
  proof.
\end{proof}

Before we proceed let us introduce some additional notation to be used in the
proofs below. We will assume that $A_n= [X_{ij}^\un]_{i,j\le n}$ but for
notational simplicity we will suppress the superscript $\un$ and write
$X_{ij}$ instead of $X_{ij}^\un$. We will refer to the rows of $A_n$ as
$X_1,\ldots,X_n$. Since the law of $A_n$ is unconditional, sometimes we will
work with the matrix $[\varepsilon_{ij}X_{ij}]_{i,j\le n}$, where
$\varepsilon_{ij}$ are independent Rademacher variables independent of
$X_{ij}$. Slightly abusing the notation, we will sometimes identify this new
matrix with $A_n$.

Let us now pass to the proof of Lemma \ref{le:smallest_sv}.

\begin{proof}[Proof of Lemma \ref{le:smallest_sv}]
  We will denote the rows of $M_n$ and $A_n+M_n$ by $Y_1,\ldots,Y_n$ and
  $Z_1,\ldots,Z_n$ respectively. By estimates from \cite{RV} (see also
  \cite[Lemma B.2]{BCC2}) we have % {\tt perhaps here we should write more?}
  \begin{align}\label{eq:RV_bound}
    \p(s_n(A_n+M_n) \le n^{-6.5}) \le n\max_i\p(\DIST(X_i+Y_i,\SPAN(\{Z_j\}_{j\neq i}))\le n^{-6}).
  \end{align}
  Let us fix $i$. Remarkably, the conditional distribution of $X_{i}$ given
  $(X_j)_{j\neq i}$ is log-concave and unconditional. Let $\sigma_k^2 =
  \E(X_{ik}^2|(X_{j})_{j\neq i})$. Now for every $\varepsilon>0$, every random
  variable $X$ and random vector $Y$, by Markov's inequality,
  $\mathbf{1}_{\{\mathbb{E}(X^2|Y=y)\leq\varepsilon^2\}}\leq\frac{4}{3}\mathbb{P}(|X|\leq
  2\varepsilon\,|\,Y=y)$ for every $y$, which gives
  $\mathbb{P}(\mathbb{E}(X^2|Y)\leq\varepsilon^2)\leq\frac{4}{3}\mathbb{P}(|X|\leq2\varepsilon)$,
  and in particular
  \[
  \p(\sigma_k^2 \le \varepsilon^2) %
  \le \frac{4}{3}\p(|X_{ik}|\le 2\varepsilon).
  \]
  Next, since $X_{ik}$ is log-concave of unit variance, Theorem
  \ref{thm:Bourgain} in dimension one gives
  \[
  \p(\sigma_k^2 \le \varepsilon^2) %
  \le \frac{4}{3}\p(|X_{ik}|\le 2\varepsilon) %
  \le C\varepsilon.
  \]
  Therefore we get
  \[
  \p(\exists_{k\le n} \sigma_k^2 \le n^{-7}) \le Cn\cdot n^{-7/2} = Cn^{-5/2}.
  \]
  Note that $\DIST(X_i+Y_i,\SPAN(\{Z_j\}_{j\neq i})) = |\langle
  X_i+Y_i,e\rangle|$, where $e$ is a random normal to $\SPAN(\{Z_j\}_{j\neq
    i})$ (note that due to the existence of a density this space is with
  probability one of dimension $n-1$). %Moreover, the conditional distribution
  % of $X_i$ given the remaining rows of the matrix, is log-concave and
  % unconditional.
  Let $e' = \Re e$, $e'' = \Im e$. Since $|e| = 1$, at least one of the real
  vectors $e',e''$ has Euclidean length greater than $2^{-1/2}$. Without loss of generality we can assume that $|e'| \ge 2^{-1/2}$ (otherwise we may multiply $e$ by $\sqrt{-1}$). By
  unconditionality and the fact that $e'$ is measurable with respect to
  $(X_j)_{j\neq i}$ , we get
  \[
  \E\Big(\langle X_i,e'\rangle^2|(X_j)_{j\neq i}\Big) %
  \ge \frac{1}{2}\min_{k\le n} \sigma_k^2.
  \]
  Moreover, the conditional distribution of $\langle X_i,e'\rangle$ given
  $(X_j)_{j\neq i}$ is log-concave and symmetric. Thus we have
  \begin{align*}
    \p(\DIST(X_i+Y_i,&\SPAN(\{Z_j\}_{j\neq i}))\le n^{-6}) \\
    &= \p(|\langle X_i+Y_i,e\rangle| \le n^{-6}) \\
    &\le \p(|\langle X_i,e'\rangle - \Re \langle Y_i,e\rangle| \le n^{-6})\\
    &\le \p(\exists_{k\le n} \sigma_k^2 \le n^{-7}) + C n^{-6}\E\Big(\E\Big(\langle X_i,e'\rangle^2|(X_j)_{j\neq i}\Big)\Big)^{-1/2}\ind{\forall_k \sigma_k^2> n^{-7}}\\
    &\le Cn^{-5/2} + Cn^{-6}n^{7/2} \le C n^{-5/2},
  \end{align*}
  where we used conditionally the fact that the density of a symmetric
  one-dimensional log-concave r.v. $X$ is bounded by $C/\|X\|_2$ (which
  follows by Theorem \ref{thm:Bourgain}). In combination with
  \eqref{eq:RV_bound} this ends the proof of the lemma.
\end{proof}

It remains to prove Lemma \ref{le:small_sv}. The argument follows the ideas
introduced by Tao and Vu in \cite{TV} and relies on a bound on a distance
between a single row of the matrix and the subspace spanned by some other $k$
rows. Since contrary to the situations considered in \cite{TV,CircLawUnc}, we
do not have independence between rows, a prominent role in the proof will be
played by log-concavity and unconditionality, which will allow us to replace
independence with upper bounds on the densities given in Theorem
\ref{thm:Bourgain}.

\begin{lemma}[Distance to a random subspace]\label{le:distance} Let $A_n$ be
  an $n\times n$ random matrix with log-concave isotropic unconditional
  distribution and $M_n$ a deterministic $n\times n$ matrix with $\|M_n\| \le
  R\sqrt{n}$. Denote the rows of $A_n+M_n$ by $Z_1,\ldots,Z_n$ and let $H$ be
  the space spanned by $Z_1,\ldots,Z_k$ ($k < n$). Then with probability at
  least $1 - 2n\exp(-c_R (n-k)^{1/3})$,
  \[
  \DIST(Z_{k+1},H) \ge c_R \sqrt{n-k}.
  \]
\end{lemma}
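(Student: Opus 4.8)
The plan is to fix a row index — say row $k+1$ — and condition on the other rows $Z_1,\ldots,Z_k,Z_{k+2},\ldots,Z_n$. Remarkably, as noted in the proof of Lemma~\ref{le:smallest_sv}, the conditional law of $X_{k+1}$ given the remaining rows is again log-concave and unconditional on $\R^n$, though in general no longer isotropic. Write $\sigma_j^2 = \E(X_{k+1,j}^2 \mid (X_i)_{i\neq k+1})$ for the conditional variances; the small-ball estimate from the proof of Lemma~\ref{le:smallest_sv} gives $\p(\exists j\le n:\sigma_j^2\le n^{-7})\le Cn^{-5/2}$, so on an event of overwhelming probability all $\sigma_j$ are bounded below. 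Let $P$ be the orthogonal projection onto $H^\perp$ (a subspace measurable with respect to the conditioning variables, of dimension $n-k$ almost surely). Then $\DIST(Z_{k+1},H) = |P(X_{k+1}+Y_{k+1})|$ where $Y_{k+1}$ is the $(k+1)$-st row of $M_n$. Since $X_{k+1}+Y_{k+1}$ is, conditionally, a log-concave vector, $P(X_{k+1}+Y_{k+1})$ is a log-concave vector in the $(n-k)$-dimensional space $H^\perp$, so it suffices to prove a lower deviation bound $|PX'| \ge c\sqrt{n-k}$ for a log-concave vector $X'$ (after absorbing the deterministic shift $PY_{k+1}$, which only helps on average but must be handled — see the last paragraph).

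The core estimate is a lower bound on $\E|PX_{k+1}|^2$ and an upper tail bound on the fluctuations. For the first moment: $\E(|PX_{k+1}|^2 \mid (X_i)_{i\neq k+1}) = \sum_j \sigma_j^2 \|Pe_j\|^2$ only in the unconditional case where the off-diagonal conditional covariances vanish — here unconditionality of the conditional law is exactly what kills the cross terms, since $\E(X_{k+1,j}X_{k+1,\ell}\mid\cdots)=0$ for $j\neq\ell$. Hence on the good event $\E(|PX_{k+1}|^2\mid\cdots)\ge (\min_j\sigma_j^2)\sum_j\|Pe_j\|^2 = (\min_j\sigma_j^2)\,\tr P = (\min_j\sigma_j^2)(n-k) \ge c(n-k)$ once $\min_j\sigma_j^2 \ge n^{-7}$ — but that bound is far too weak. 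The fix is to not take the worst $\sigma_j$ but rather to observe that $\tr P = n-k$ is large, and only a vanishing fraction of coordinates can have small $\sigma_j$: more precisely, using $\p(\sigma_j^2\le\ve^2)\le C\ve$ one shows that with high probability $\#\{j:\sigma_j^2 < 1/2\} \le n^{3/4}$ say, and since $\sum_j\|Pe_j\|^2 = n-k$ while each $\|Pe_j\|^2\le 1$, throwing away the bad coordinates still leaves $\E(|PX_{k+1}|^2\mid\cdots)\ge \tfrac12(n-k-n^{3/4}) \ge \tfrac13(n-k)$ when $n-k$ is not too small relative to $n^{3/4}$; the regime $n-k$ small is where the $(n-k)^{1/3}$ in the exponent and the hypothesis force the relevant scales.

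For the concentration: conditionally, $x\mapsto|Px|$ is $1$-Lipschitz, and a log-concave measure satisfies a deviation inequality below its median/mean. Using Theorem~\ref{thm:Latala}, we may compare $|P(X_{k+1}+Y_{k+1})|$ to $|P(\calE + Y_{k+1})|$ for a symmetric exponential vector $\calE$; for the exponential (product) measure the Poincaré inequality with a universal constant holds, so Theorem~\ref{thm:concentration} yields $\p(\,|P(\calE+Y_{k+1})| \le \E|P(\calE+Y_{k+1})| - t) \le 2\exp(-ct)$. Combined with $\E|P(\calE+Y_{k+1})| \ge (\E|P(\calE+Y_{k+1})|^2)^{1/2}/\sqrt{2}$ — or rather a Paley–Zygmund type lower bound exploiting that the fourth moment of a log-concave vector's norm is comparable to the square of the second — and the first-moment bound $\E|P(\calE+Y_{k+1})|^2 \ge \E|P\calE|^2 \ge c(n-k)$ (the deterministic shift only increases the squared norm in expectation since $\calE$ is centered), we get $\E|P(\calE+Y_{k+1})| \ge c\sqrt{n-k}$, and then choosing $t = \tfrac12 c\sqrt{n-k}$ gives $\p(\DIST(Z_{k+1},H) \le c_R\sqrt{n-k} \mid (X_i)_{i\neq k+1}) \le 2\exp(-c_R\sqrt{n-k})$ on the good event. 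Taking expectations over the conditioning and adding back the $O(n^{-5/2})$ failure probability of the good event — the union bound over which row we remove is already subsumed since we fixed $k+1$, but the statement as phrased needs the bound to hold simultaneously, contributing the factor $n$ — yields the claimed $1-2n\exp(-c_R(n-k)^{1/3})$, where the exponent is weakened from $\sqrt{n-k}$ to $(n-k)^{1/3}$ precisely to absorb the polynomial loss from the $\sigma_j$-small-ball union bound and from controlling $\E|P\calE|^2$ from below when $n-k$ is small. The main obstacle is the first-moment lower bound in the regime where $n-k$ is comparable to a small power of $n$: there the crude "discard bad coordinates" argument is tight only if one is careful that $H$ is a \emph{random} subspace correlated with the conditioning, so one cannot simply say $\|Pe_j\|^2$ is spread out — the saving grace is that $\tr P = n-k$ is a deterministic identity regardless of the orientation of $H$.
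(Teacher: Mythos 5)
There are two genuine gaps, and together they miss the key idea of the paper's proof. First, your first-moment bound rests on the claim that with high probability at most $n^{3/4}$ indices $j$ satisfy $\sigma_j^2<1/2$. The only estimate available is the one-dimensional small-ball bound $\p(\sigma_j^2\le\ve^2)\le C\ve$, which at the constant threshold $\ve^2=1/2$ gives only a constant (possibly close to $1$) for each $j$; since the events for different $j$ are dependent (the rows of $A_n$ are not independent, and all $\sigma_j$ are functions of the same conditioning variables), neither Markov nor any union bound yields a count of order $n^{3/4}$ --- with non-negligible probability a constant fraction of the $\sigma_j$ could be small simultaneously. More importantly, even granting some control on the number of ``bad'' coordinates, your argument collapses exactly in the regime that matters for Lemma \ref{le:small_sv}, namely $n-k$ of order $n^{\gamma}$ with $\gamma$ small: the identity $\tr P=n-k$ does not prevent the random $(n-k)$-dimensional space $H^\perp$ (which is correlated with the conditioning, hence with the bad set) from being essentially supported on the bad coordinates, so $\sum_{j\ \mathrm{good}}\|Pe_j\|^2$ can be tiny. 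This is precisely what the paper's Step 2 is for: it proves that every unit vector $e_i\in K^\perp$ is \emph{incompressible} (its mass is spread over $\ge\alpha n$ coordinates), via small-ball estimates from the density bound of Theorem \ref{thm:Bourgain}, an $\ve$-net over sparse vectors, and the operator norm bound of Lemma \ref{le:operator_norm} applied to the matrix of the first $k$ rows; combined with the fact that at least $(1-\alpha/2)n$ entries of $X_{k+1}$ exceed a constant $\rho$ (an estimate with exponentially small failure probability, obtained from the $C^m$ density bound for $m$-dimensional marginals and a union bound over supports --- a tool you do not have for the conditional variances at a constant threshold), this gives the lower bound $\beta(n-k-1)$ on the key quadratic form. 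Your proposal acknowledges the alignment issue but offers no substitute for this incompressibility step.

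Second, your concentration mechanism does not work. Theorem \ref{thm:Latala} requires the vector to be \emph{isotropic} unconditional log-concave, while the conditional law of $X_{k+1}$ given the other rows is, as you note, not isotropic; applying it unconditionally is also impossible because the seminorm $x\mapsto|Px|$ is random and correlated with $X_{k+1}$. Moreover, Latala's theorem is a domination of \emph{upper} tails, and cannot transfer a lower-deviation (small-ball around the mean) inequality from the exponential model back to $X_{k+1}$; concentration of $|P(\calE+Y_{k+1})|$ says nothing about $\DIST(Z_{k+1},H)$. The paper circumvents the absence of row independence differently: it writes $A_n=[\varepsilon_{ij}X_{ij}]$ and conditions on all the $X_{ij}$ and on the signs of the other rows, so that $K^\perp$ is fixed and the only remaining randomness is the signs $\varepsilon_{k+1,j}$, which are genuinely independent of everything; Talagrand's inequality on the discrete cube then gives concentration with Lipschitz constant $\max_j|X_{k+1,j}|$, controlled by Borell's subexponential tails (Theorem \ref{thm:Borell}) at level $(n-k)^{1/3}$ --- which is also where the exponent in the stated probability comes from, not from absorbing polynomial losses as you suggest. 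Without replacing both your variance-count step and your concentration step by arguments of this type, the proof does not go through.
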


Before we prove the above lemma let us show how it implies Lemma
\ref{le:small_sv}. The argument is due to Tao and Vu (see the proof of
\cite[Lemma 6.7]{TV}). We present it here for completeness.

\begin{proof}[Proof of Lemma \ref{le:small_sv}] Consider $i \ge n^\gamma$. Let
  $k = n - \lfloor i/2\rfloor$ and let $B_n'$ be the $k\times n$ matrix with
  rows $Z_1,\ldots,Z_k$ (we use the notation from Lemma \ref{le:distance}). By
  Cauchy interlacing inequalities we have $s_{n-j}(A_n+M_n) \ge s_{n-j}(B_n')$
  for $j\ge \lfloor i/2\rfloor$. Let $H_j$, $j = 1,\ldots,k$ be the subspace
  of $\C^n$ spanned by all the rows of $B_n'$ except for the $j$-th one. By
  \cite[Lemma A4]{TV},
  \[
  \sum_{j=1}^k s_j(B_n')^{-2} = \sum_{j=1}^k \DIST(Z_j,H_j)^{-2}.
  \]
  By Lemma \ref{le:distance}, for each $j\le k$, $\DIST(Z_j,H_j) \ge
  c_R\sqrt{n-k+1}$ with probability at least $1 - 2n\exp(-c_R i^{1/3})$
  % \ge 1 - \exp(-c_R n^{\gamma/3})$
  (note that we can use the lemma since all its assumptions are preserved
  under permutation of rows of the matrix $A_n$). Thus by the union bound,
  with probability at least $1 - 2n^2\exp(-c_R n^{\gamma/3})$, we get
  \[
  \sum_{j=1}^k s_j(B_n')^{-2} \le C_R \frac{k}{i}.
  \]
  On the other hand, the left-hand side above is at least $s_{n-i}(B_n')^{-2}
  (k - n + i) \ge s_{n-i}(B_n')^{-2} i/2$. This gives that with probability at
  least $1 - 2n^2\exp(-c_R n^{\gamma/3})$,
  \[
  s_{n-i}(A_n+M_n)^2 \ge s_{n-i}(B_n')^2 %
  \ge c_R \frac{ i^2 }{n - \lfloor n^{\gamma}/2\rfloor},
  \]
  which implies that $s_{n-i}(\frac{1}{\sqrt{n}}(A_n+M_n)) \ge
  c_R\frac{i}{n}$. We may now conclude the proof by taking the union bound
  over all $i \ge n^\gamma$ and adjusting the constants.
\end{proof}

\begin{proof}[Proof of Lemma \ref{le:distance}]
  Denote the rows of $M_n$ by $Y_1,\ldots,Y_n$. Recall that thanks to
  unconditionality we can assume that $A_n = [\varepsilon_{ij}X_{ij}]_{i,j\le
    n}$, where $[X_{ij}]_{i,j\le n}$ is log-concave, isotropic and
  unconditional. For simplicity in what follows we will write $\varepsilon_j$
  instead of $\varepsilon_{k+1,j}$.

  With probability one $\dim(H) = k$. Let us however replace $H$ by $K
  =\SPAN(H,Y_{k+1})$ and assume without loss of generality that this space is
  of dimension $k+1$ (if not one can always choose a vector $\tilde{Y}$,
  measurable with respect to $\sigma(Z_1,\ldots,Z_k,Y_{k+1})$ such that $K =
  \SPAN(H,\tilde{Y})$ is of dimension $k+1$). Let $e_1,\ldots,e_{n-k-1}$ be an
  orthonormal basis in $K^\perp$ and $P$ be the orthogonal projection onto
  $K^\perp$. By $e_{ij}$ we will denote the $j$-th coordinate of $e_i$.
  \medskip

  Without loss of generality we can also assume that $k > n/2$ (otherwise we
  may change the constant $c_R$ to $c_R/2$).

  The proof will consist of several steps. First we will take advantage of
  independence between $\varepsilon_{ij}$'s and $X_{ij}$'s and use
  concentration of measure on the discrete cube to provide a lower bound on
  the distance which will be expressed in terms of $X_{ij}$'s and $e_{ij}$'s
  and will hold with high probability with respect to the Rademacher variables
  (i.e. conditionally on $X_{ij}$'s). Next we will use log-concavity to show
  that the random lower bound is itself with high probability bounded from
  below by $c_R\sqrt{n-k}$.

  \medskip

  \noindent\textit{Step 1. Estimates with respect to Rademachers.} We
  have
\begin{align}\label{eq:basis_rep}
  \DIST(X_{k+1},K)
  &= |P X_{k+1}| = \Big(\sum_{i=1}^{n-k-1} |\langle X_{k+1},e_i\rangle|^2\Big)^{1/2}\nonumber\\
  & = \Big(\sum_{i=1}^{n-k-1}\Big|\sum_{j=1}^n
  X_{k+1,j}\varepsilon_{j}\bar{e}_{ij}\Big|^2\Big)^{1/2}
   =: f(\varepsilon_1,\ldots,\varepsilon_n).
\end{align}
The function $f$ is a semi-norm, $L$-Lipschitz with
\[
  L = \sup_{x\in S^{n-1}} |f(x)| = \sup_{x\in S^{n-1}} |P (X_{k+1,i}x_i)_{i=1}^n| \le \sup_{x\in S^{n-1}} \sqrt{\sum_{i=1}^n X_{k+1,i}^2 x_i^2} \le \max_{i\le n}|X_{k+1,i}|.
\]
Moreover, using the Khintchin-Kahane inequality we get
\begin{align*}
  \E_\varepsilon f(\varepsilon_1,\ldots,\varepsilon_n) \ge c (\E_\varepsilon
  f(\varepsilon_1,\ldots,\varepsilon_n)^2)^{1/2} \ge c\Big(\sum_{j=1}^n
  X_{k+1,j}^2\Big(\sum_{i=1}^{n-k-1} |e_{ij}|^2\Big)\Big)^{1/2}.
\end{align*}
Since $\DIST(Z,H) \ge \DIST(X_{k+1},K)$, by Talagrand's concentration inequality on the discrete cube (see \cite{TalCube}) we get for some absolute constant $c> 0$,
\begin{align}\label{eq:conditional_estimate}
&\p_\varepsilon\Big( \DIST(Z_{k+1},H) \le c \Big(\sum_{j=1}^n X_{k+1,j}^2\Big(\sum_{i=1}^{n-k-1} |e_{ij}|^2\Big)\Big)^{1/2}\Big) \nonumber\\
&\le 2\exp\bigg(-c \frac{\sum_{j=1}^n X_{k+1,j}^2\Big(\sum_{i=1}^{n-k-1} |e_{ij}|^2\Big)}{\max_{i\le n} X_{k+1,i}^2}\bigg).\
\end{align}

\medskip
\noindent\textit{Step 2. Lower bounds on coordinates on $e_i$.}
Let $\SPARSE(\delta)$ denote the set of $\delta n$ sparse vectors in $\C^{n}$,
i.e. vectors with at most $\delta n$ nonzero coordinates. Let $S_\C^{n-1}$ be
the unit Euclidean ball in $\C^n$ and define the set of compressible and
incompressible vectors by
\[
\COMP(\delta,\rho) %
= \{x\in S_\C^{n-1}\colon \DIST(x,\SPARSE(\delta)) \le \rho\} %
\quad\text{and}\quad \INCOMP(\delta,\varepsilon) %
= S_\C^{n-1} \setminus \COMP(\delta,\varepsilon).
\]
We will now show that with high probability for each $i\le n- k - 1$, $e_i \in
\INCOMP(\delta,\varepsilon)$ (with $\delta,\varepsilon$ depending only on
$R$).

We will follow the by now standard approach (see \cite{RV,LPRT}) and consider
first the set of sparse vectors. Let $A',M',B'$ be $k\times n$ matrices with
rows resp. $(X_i)_{i\le k},(Y_i)_{i\le k}, (Z_i)_{i\le k}$ and denote by
$X_i',Y_i',Z_i'$ ($i=1,\ldots,n$) their columns. Note that for any real vector
$x \in S^{n-1}$, the random vector
\[
S = \sum_{i=1}^n x_i X_i'
\]
is a log concave isotropic unconditional random vector in $\R^k$ (it is
log-concave as a linear image of a log-concave vector, unconditionality and
isotropicity can be directly verified).
% {\tt should we give more detail about unconditionality?})
Therefore, by Lemma \ref{thm:Bourgain}, it has a density bounded by $C^k$.
Thus for any deterministic vector $v \in \R^k$,
\begin{align}\label{eq:small_ball}
\p(S \in v + 2r\sqrt{k}B_2^k) \le C^k2^k r^k k^{k/2}vol(B_2^k) \le C^k r^k,
\end{align}
where $B_2^k$ is the unit Euclidean ball in $\R^k$. Consider now any $x \in
S_\C^{n-1}$ and let $x' = \Re x$, $x'' = \Im x$. We have
\[
B'x = \sum_{i=1}^n x_i' X_i' + \Re (\sum_{i=1}^n x_i Y_i') + i\sum_{i=1}^n
x_i'' X_i' + i\Im (\sum_{i=1}^n x_i Y_i').
\]
Setting $v' = -\Re\sum_{i=1}^n x_i Y_i'$ and $v'' = - \Im \sum_{i=1}^n x_i
Y_i'$ we get
\[
  \p(|B' x| \le 2r\sqrt{k}) %
  \le \min\Big(\p( |\sum_{i=1}^n x_i' X_i' - v'|\le 2r\sqrt{k}), \p( |\sum_{i=1}^n x_i'' X_i' - v''|\le 2r\sqrt{k})\Big).
\]
At least one of the vectors $x'$, $x''$ has Euclidean not smaller than
$2^{-1/2}$. Thus using \eqref{eq:small_ball}, we get
\begin{align}\label{eq:individual_vector}
\p(|B' x| \le 2r\sqrt{k}) \le C^kr^k.
\end{align}
Note that the set $\SPARSE(\delta)\cap S_\C^{n-1}$ admits an $\varepsilon$-net
$\mathcal{N}$ of cardinality
\[
\binom{n}{\lfloor \delta n\rfloor }(3/\varepsilon)^{2\lfloor\delta n\rfloor} \le \Big(\frac{C}{\varepsilon^2 \delta}\Big)^{\delta n}.
\]
(This can be easily seen by using volumetric estimates for each choice of the support).
By \eqref{eq:individual_vector} and the union bound with probability at least
\[
1 -r^kC^n\Big(\frac{1}{\varepsilon^2 \delta}\Big)^{\delta n},
\]
for all $x \in \mathcal{N}$,
\[
|B'x| > 2r \sqrt{k}.
\]
If $\varepsilon \le 1/2$ then on the above event we have,
\[
|B'x| > r\sqrt{k} - 2\varepsilon \|B'\|
\]
for all $x \in \COMP(\delta,\varepsilon)$. Indeed if $x \in
\COMP(\delta,\varepsilon)$, then there exists $y \in \SPARSE(\delta)$ such
that $|x-y| \le \varepsilon$. But $|y| \ge 1- \varepsilon$ then and therefore
$|B'y| \ge (1-\varepsilon)|B'\frac{y}{|y|}| \ge (1-\varepsilon)(2r\sqrt{k} -
\varepsilon\|B'\|)$, since $y/|y| \in \SPARSE(\delta)\cap S_\C^{n-1}$ (and so
it can be $\varepsilon$-approximated by a vector from $\mathcal{N}$). Now
$|B'x| \ge |B'y| - \varepsilon \|B'\| \ge r\sqrt{k} - 2\varepsilon\|B'\|$.

Thus by Lemma \ref{le:operator_norm} and the assumption $k > n/2$ we have
\[
  |B'x| > (r/2 - 3(C+R)\varepsilon)\sqrt{n}
\]
for all $x \in \COMP(\delta,\varepsilon)$, with probability at least
\[
  1 - r^{n/2}C^n\Big(\frac{1}{\varepsilon^2 \delta}\Big)^{\delta n} - 2\exp(-c(R+C)\sqrt{n}).
\]
If we set $\varepsilon = r/(12(C+R))$, we get for $r \in (0,1)$ and $\delta$ small enough (depending only on $R$),
\[
|B'x| > r\sqrt{n}/4 > 0
\]
for all $x \in \COMP(\delta,\varepsilon)$,
with probability at least
\begin{align*}
&1 - r^{n/2}C^n\Big(\frac{144(R+C)^2}{r \delta}\Big)^{\delta n} - 2\exp(-c(R+C)\sqrt{n})\\
&\ge 1 - C^n r^{n/4} - 2 \exp(-c(R+C)\sqrt{n}).
\end{align*}
Now for $r$ sufficiently small, the right hand side above is greater than $1 -  2\exp(-c\sqrt{n})$.
In particular we have shown that there exist $\delta,\varepsilon > 0$ depending only on $R$ such that with probability at least $1 - 2\exp(-c\sqrt{n})$,
\[
\inf_{x\in \COMP(\delta,\varepsilon)}|B'x| > 0
\]
and in consequence (since $B' e_i = 0$)
\begin{align}\label{eq:incompressible_basis}
e_i \in \INCOMP(\delta,\varepsilon)
\end{align}
for $i = 1,\ldots,n-k-1$.

\medskip
\noindent\textit{Step 3. Estimating conditional expectation.}
From \cite[Lemma 3.4]{RV} we get that whenever $x \in
\INCOMP(\delta,\varepsilon)$, then there exists a set $I \subseteq \C^n$ of
cardinality at least $\frac{1}{2}\varepsilon^2\delta n$, such that for all $i
\in I$, $|x_i| \ge \frac{\varepsilon}{\sqrt{2n}}$ (the lemma is proved in
\cite{RV} in the real case, but the proof works as well for $\C^n$,
alternatively one may formally pass to the complex case by identifying $\C^n$
with $\R^{2n}$, which will just slightly change the constants).

Using this fact and (\ref{eq:incompressible_basis}), we get that with
probability at least $1 - 2\exp(-c\sqrt{n})$, for $i=1,\ldots,n-k-1$, there
exists a set $I_i \subseteq \{1,\ldots,n\}$ of cardinality at least $\alpha n$
(where $\alpha > 0$ depends only on $R$) such that $|e_{ij}|^2 \ge
\varepsilon^2/2n$ for all $j \in I_i$. We will now prove that for some
constant $\rho > 0$, depending only on $R$, with probability at least $1 -
2\exp(- \alpha n)$, the set $J = \{j\colon |X_{k+1,j}| \ge \rho\}$ satisfies
\begin{align}\label{eq:cardinality_est}
|J| > (1-\alpha/2)n.
\end{align}
Thus we will obtain that for some $\beta$, depending only on $R$, with
probability $1 - 2\exp(- c_R \sqrt{n})$,
\begin{align}\label{eq:cond_exp_est}
  \sum_{j=1}^nX_{k+1,j}^2\sum_{i=1}^{n-k-1}|e_{ij}|^2 %
  \ge \sum_{i=1}^{n-k-1} \sum_{j\in I_i\cap J} X_{k+1,j}^2|e_{ij}^2| %
  \ge (n-k-1)\frac{\alpha n}{2} \rho^2 \frac{\varepsilon^2}{2n} %
  \ge \beta(n-k-1).
\end{align}

To prove \eqref{eq:cardinality_est} we note that for every set $I\subseteq
\{1,\ldots,n\}$ of cardinality $m$, the random vector $(X_{k+1,i})_{i\in I}$
is isotropic, log-concave and unconditional and hence by Theorem
\ref{thm:Bourgain} has a density bounded by $C^m$. Thus
\[
\p(|X_{k+1,i}|\le \rho \;\textrm{for all}\; i\in I) \le 2^mC^m\rho^m.
\]
Taking the union bound over all sets of cardinality $m= \lfloor \alpha
n/2\rfloor $ we obtain
\[
\p(|\{j\colon |X_{k+1,j}| \ge \rho\}| \le (1-\alpha/2)n) \le \binom{n}{m}2^m C^m\rho^m \le2^{\alpha n} C^{\alpha n} e^{C\alpha n\log(2/\alpha)}\rho^{\lfloor \alpha n/2\rfloor},
\]
which gives \eqref{eq:cardinality_est} for $\rho = \exp(-C\log(2/\alpha))$.

\medskip
\noindent\textit{Step 4. Conclusion of the proof.}
Without loss of generality we may assume that $n-k > C$ (otherwise we can make
the bound on probability in the statement of the lemma trivial, by playing
with the constant $c_R$). Note that by Theorem \ref{thm:Borell} and the fact
that $X_{k+1,i}$ is log-concave of mean zero and variance one,
\[
\p(\max_{i\le n} X_{k+1,i}^2 > (n-k)^{2/3}) \le 2n\exp(-c(n-k)^{1/3}).
\]
Combining this estimate with \eqref{eq:cond_exp_est} we get
\[
\p\left(\sum_{j=1}^nX_{k+1,j}^2\sum_{i=1}^{n-k-1}e_{ij}^2 %
  \ge \beta (n-k)/2, \max_{i\le n} X_{k+1,i}^2 %
  \le (n-k)^{2/3}\right) %
\ge 1 -4n\exp(-c_R (n-k)^{1/3}).
\]
Denote the event above by $U$. Using the above estimate together with
\eqref{eq:conditional_estimate} and the Fubini theorem, we get
\begin{align*}
  \p(\DIST(Z_{k+1},H) \le c\sqrt{\beta/2}\sqrt{n-k})
  &\le \E\left(\Ind{U}\p_\varepsilon(\DIST(Z_{k+1},H)\le c\sqrt{\beta/2}\sqrt{n-k})\right) + \p(U^c)\\
  &\le 6n\exp(-c_R(n-k)^{1/3}).
\end{align*}
To prove the lemma it is now enough to adjust the constants.
\end{proof}

\bibliographystyle{abbrv}
\bibliography{cirlog}

\begin{thebibliography}{10}

\bibitem{CircLawUnc}
R.~Adamczak.
\newblock {O}n the {M}archenko-{P}astur and circular laws for some classes of
  random matrices with dependent entries.
\newblock {\em Electronic Journal of Probability}, 16:1065--1095, 2011.

\bibitem{InfoNoise}
R.~Adamczak.
\newblock Some remarks on the {D}ozier-{S}ilverstein theorem for random
  matrices with dependent entries.
\newblock To appear in \emph{{R}andom {M}atrix {T}heory and {A}pplications},
  preprint 2012.

\bibitem{AGLPT2}
R.~Adamczak, O.~Gu{\'e}don, A.~E. Litvak, A.~Pajor, and N.~Tomczak-Jaegermann.
\newblock Condition number of a square matrix with i.i.d. columns drawn from a
  convex body.
\newblock {\em Proc. Amer. Math. Soc.}, 140(3):987--998, 2012.

\bibitem{AGZ}
G.~W. Anderson, A.~Guionnet, and O.~Zeitouni.
\newblock {\em An introduction to random matrices}, volume 118 of {\em
  Cambridge Studies in Advanced Mathematics}.
\newblock Cambridge University Press, Cambridge, 2010.

\bibitem{logSob}
C.~An{\'e}, S.~Blach{\`e}re, D.~Chafa{\"{\i}}, P.~Foug{\`e}res, I.~Gentil,
  F.~Malrieu, C.~Roberto, and G.~Scheffer.
\newblock {\em Sur les in\'egalit\'es de {S}obolev logarithmiques}, volume~10
  of {\em Panoramas et Synth\`eses [Panoramas and Syntheses]}.
\newblock Soci\'et\'e Math\'ematique de France, Paris, 2000.

\bibitem{BS}
Z.~Bai and J.~W. Silverstein.
\newblock {\em Spectral analysis of large dimensional random matrices}.
\newblock Springer Series in Statistics. Springer, New York, second edition,
  2010.

\bibitem{Bai}
Z.~D. Bai.
\newblock Circular law.
\newblock {\em Ann. Probab.}, 25(1):494--529, 1997.

\bibitem{BartheCorderoSymmetry}
F.~{Barthe} and D.~{Cordero-Erausquin}.
\newblock {Invariances in variance estimates}.
\newblock {\em To appear in Journal of the London Math. Soc. Available at
  http://arxiv.org/abs/1106.5985}, June 2011.

\bibitem{BobkovNazarovUnc}
S.~G. Bobkov and F.~L. Nazarov.
\newblock On convex bodies and log-concave probability measures with
  unconditional basis.
\newblock In {\em Geometric aspects of functional analysis}, volume 1807 of
  {\em Lecture Notes in Math.}, pages 53--69. Springer, Berlin, 2003.

\bibitem{BCC2}
C.~Bordenave, P.~Caputo, and D.~Chafa{\"{\i}}.
\newblock Spectrum of non-{H}ermitian heavy tailed random matrices.
\newblock {\em Comm. Math. Phys.}, 307(2):513--560, 2011.

\bibitem{BCC1}
C.~Bordenave, P.~Caputo, and D.~Chafa{\"{\i}}.
\newblock Circular law theorem for random {M}arkov matrices.
\newblock {\em Probab. Theory Related Fields}, 152(3-4):751--779, 2012.

\bibitem{BCAround}
C.~Bordenave and D.~Chafa{\"{\i}}.
\newblock Around the circular law.
\newblock {\em Probab. Surv.}, 9:1--89, 2012.

\bibitem{B}
C.~Borell.
\newblock Convex measures on locally convex spaces.
\newblock {\em Ark. Mat.}, 12:239--252, 1974.

\bibitem{GiannopoulosBook}
S.~Brazitikos, A.~Giannopoulos, P.~Valettas, and B.-H. Vritsiou.
\newblock {\em Notes on isotropic convex bodies}.
\newblock In preparation, Available at
  http://users.uoa.gr/~apgiannop/notes-on-isotropic-convex-bodies.pdf.

\bibitem{lamabook}
D.~Chafa{\"{\i}}, O.~Gu\'edon, G.~Lecu\'e, and A.~Pajor.
\newblock Interactions between compressed sensing, random matrices, and high
  dimensional geometry.
\newblock Panoramas et Synth\`eses 37, Soci\'et\'e Math\'ematique de France, to
  appear, 2012.

\bibitem{MR2919538}
Z.~Dong, T.~Jiang, and D.~Li.
\newblock Circular law and arc law for truncation of random unitary matrix.
\newblock {\em J. Math. Phys.}, 53(1):013301, 14, 2012.

\bibitem{MR1477381}
J.~Feinberg and A.~Zee.
\newblock Non-{G}aussian non-{H}ermitian random matrix theory: phase transition
  and addition formalism.
\newblock {\em Nuclear Phys. B}, 501(3):643--669, 1997.

\bibitem{Ginibre}
J.~Ginibre.
\newblock Statistical ensembles of complex, quaternion, and real matrices.
\newblock {\em J. Mathematical Phys.}, 6:440--449, 1965.

\bibitem{G1}
V.~L. Girko.
\newblock The circle law.
\newblock {\em Teor. Veroyatnost. i Mat. Statist.}, (28):15--21, 1983.

\bibitem{G2}
V.~L. Girko.
\newblock The strong circular law. {T}wenty years later. {I}.
\newblock {\em Random Oper. Stochastic Equations}, 12(1):49--104, 2004.

\bibitem{GTCirc}
F.~G{\"o}tze and A.~Tikhomirov.
\newblock The circular law for random matrices.
\newblock {\em Ann. Probab.}, 38(4):1444--1491, 2010.

\bibitem{MR708367}
M.~Gromov and V.~D. Milman.
\newblock A topological application of the isoperimetric inequality.
\newblock {\em Amer. J. Math.}, 105(4):843--854, 1983.

\bibitem{MR2831116}
A.~Guionnet, M.~Krishnapur, and O.~Zeitouni.
\newblock The single ring theorem.
\newblock {\em Ann. of Math. (2)}, 174(2):1189--1217, 2011.

\bibitem{Hensley}
D.~Hensley.
\newblock Slicing convex bodies---bounds for slice area in terms of the body's
  covariance.
\newblock {\em Proc. Amer. Math. Soc.}, 79(4):619--625, 1980.

\bibitem{MR2480790}
T.~Jiang.
\newblock Approximation of {H}aar distributed matrices and limiting
  distributions of eigenvalues of {J}acobi ensembles.
\newblock {\em Probab. Theory Related Fields}, 144(1-2):221--246, 2009.

\bibitem{KLS}
R.~Kannan, L.~Lov{\'a}sz, and M.~Simonovits.
\newblock Isoperimetric problems for convex bodies and a localization lemma.
\newblock {\em Discrete Comput. Geom.}, 13(3-4):541--559, 1995.

\bibitem{KlartagUnc}
B.~Klartag.
\newblock A {B}erry-{E}sseen type inequality for convex bodies with an
  unconditional basis.
\newblock {\em Probab. Theory Related Fields}, 145(1-2):1--33, 2009.

\bibitem{LatalaMatrixNorm}
R.~Lata{\l}a.
\newblock Some estimates of norms of random matrices.
\newblock {\em Proc. Amer. Math. Soc.}, 133(5):1273--1282 (electronic), 2005.

\bibitem{LatalaWeakDom}
R.~Lata{\l}a.
\newblock On weak tail domination of random vectors.
\newblock {\em Bull. Pol. Acad. Sci. Math.}, 57(1):75--80, 2009.

\bibitem{LedouxConcBook}
M.~Ledoux.
\newblock {\em The concentration of measure phenomenon}, volume~89 of {\em
  Mathematical Surveys and Monographs}.
\newblock American Mathematical Society, Providence, RI, 2001.

\bibitem{LPRT}
A.~E. Litvak, A.~Pajor, M.~Rudelson, and N.~Tomczak-Jaegermann.
\newblock Smallest singular value of random matrices and geometry of random
  polytopes.
\newblock {\em Adv. Math.}, 195(2):491--523, 2005.

\bibitem{Mehta1}
M.~L. Mehta.
\newblock {\em Random matrices and the statistical theory of energy levels}.
\newblock Academic Press, New York, 1967.

\bibitem{MilmanPajorIsotropic}
V.~D. Milman and A.~Pajor.
\newblock Isotropic position and inertia ellipsoids and zonoids of the unit
  ball of a normed {$n$}-dimensional space.
\newblock In {\em Geometric aspects of functional analysis (1987--88)}, volume
  1376 of {\em Lecture Notes in Math.}, pages 64--104. Springer, Berlin, 1989.

\bibitem{NguyenDoublyStoch}
H.~H. {Nguyen}.
\newblock {Random doubly stochastic matrices: the circular law}.
\newblock {\em Available at http://arxiv.org/abs/1205.0843}, May 2012.

\bibitem{NguyenVuCirc}
H.~H. {Nguyen} and V.~{Vu}.
\newblock {Circular law for random discrete matrices of given row sum}.
\newblock {\em Available at http://arxiv.org/abs/1203.5941}, Mar. 2012.

\bibitem{PanZhou}
G.~Pan and W.~Zhou.
\newblock Circular law, extreme singular values and potential theory.
\newblock {\em J. Multivariate Anal.}, 101(3):645--656, 2010.

\bibitem{RV}
M.~Rudelson and R.~Vershynin.
\newblock The {L}ittlewood-{O}fford problem and invertibility of random
  matrices.
\newblock {\em Adv. Math.}, 218(2):600--633, 2008.

\bibitem{SSTSmoothed}
A.~Sankar, D.~A. Spielman, and S.-H. Teng.
\newblock Smoothed analysis of the condition numbers and growth factors of
  matrices.
\newblock {\em SIAM J. Matrix Anal. Appl.}, 28(2):446--476 (electronic), 2006.

\bibitem{TalCube}
M.~Talagrand.
\newblock An isoperimetric theorem on the cube and the {K}intchine-{K}ahane
  inequalities.
\newblock {\em Proc. Amer. Math. Soc.}, 104(3):905--909, 1988.

\bibitem{MR3010398}
T.~Tao.
\newblock Outliers in the spectrum of iid matrices with bounded rank
  perturbations.
\newblock {\em Probab. Theory Related Fields}, 155(1-2):231--263, 2013.

\bibitem{TV}
T.~Tao and V.~Vu.
\newblock Random matrices: universality of {ESD}s and the circular law.
\newblock {\em Ann. Probab.}, 38(5):2023--2065, 2010.
\newblock With an appendix by Manjunath Krishnapur.

\end{thebibliography}

\section*{Acknowledgements}

We thank Alice Guionnet for interesting discussions during the winter school
``Random matrices and integrable systems'' held in the Alpine Physics spot
``Les Houches'' (2012).

\end{document}